\newcommand{\diam}{\mathrm{diam}\,}
\newcommand{\add}{\mathrm{add}}
\newcommand{\cof}{\mathrm{cof}}
\newcommand{\cov}{\mathrm{cov}}
\newcommand{\comb}{\operatornamewithlimits{\perp\!\perp}}
\newtheorem{theorem}{Theorem}
\newtheorem{example}{Example}
\newtheorem{question}{Question}
\title{Constructing balleans}
\author{Taras Banakh and   Igor  Protasov}
\address{T.Banakh: Ivan Franko National University of Lviv (Ukraine) and 
Jan Kochanowski University in Kielce (Poland)}
\email{t.o.banakh@gmail.com}
\address{I.Protasov: Faculty of Computer Science and Cybernetics, Kyiv University,          Academic Glushkov pr. 4d, 03680 Kyiv, Ukraine}
\email{i.v.protasov@gmail.com}
\begin{document}
\begin{abstract} A ballean is a set endowed with a coarse  structure. We introduce and explore three constructions of balleans  from a pregiven family of balleans:  bornological products, bouquets and combs. 
We analyze  the smallest and the largest coarse structures on a set $X$ compatible with a given bornology on $X$.
\end{abstract}
\maketitle

\section{ Introduction}
\normalsize

Given a set $X$, a family $\mathcal{E}$  of subsets of $X\times X$ is called a
{\it  coarse structure} on $X$ if
\begin{itemize}
\item{} each $E \in \mathcal{E}$  contains the diagonal $\bigtriangleup _{X}:=\{(x,x): x\in X\}$ of $X$;

\item{}  if  $E$, $E^{\prime} \in \mathcal{E}$  then  $E \circ E^{\prime} \in \mathcal{E}$  and
$ E^{-1} \in \mathcal{E}$,    where  $E \circ E^{\prime} = \{  (x,y): \exists z\;\; ((x,z) \in E,  \ (z, y)\in E^{\prime})\}$,    $ E^{-1} = \{ (y,x):  (x,y) \in E \}$;

\item{} if $E \in \mathcal{E}$ and  $\bigtriangleup_{X}\subseteq E^{\prime}\subseteq E$  then  $E^{\prime} \in \mathcal{E}$.
\end{itemize}
Elements $E\in\mathcal E$ of the coarse structure are called {\em entourages} on $X$.

For $x\in X$  and $E\in \mathcal{E}$ the set $E[x]:= \{ y \in X: (x,y)\in\mathcal{E}\}$ is called the {\it ball of radius  $E$  centered at $x$}.
Since $E=\bigcup_{x\in X}\{x\}\times E[x]$, the entourage $E$ is uniquely determined by  the family of balls $\{ E[x]: x\in X\}$.
A subfamily $\mathcal B\subset\mathcal E$ is called a {\em base} of the coarse structure $\mathcal E$ if each set $E\in\mathcal E$ is contained in some $B\in\mathcal B$.

The pair $(X, \mathcal{E})$  is called a {\it coarse space}  \cite{b1} or  a {\em ballean} \cite{b2}, \cite{b3}.
In \cite{b2} every base of a coarse structure, defined in terms of balls, is called a {\it ball structure}.
We prefer the name balleans not only by the author’s rights but also because a  coarse spaces sounds like some special type of topological spaces.
In fact,   balleans can be considered as non-topological  antipodes of uniform topological spaces.
Our compromise with \cite{b1} is in usage the name coarse structure in place of the ball structure.

In this paper, all balleans under consideration are supposed to be
 {\it connected}: for any $x, y \in X$, there is $E\in \mathcal{E}$ such $y\in E[x]$.
A subset  $Y\subseteq  X$  is called {\it bounded} if $Y= E[x]$ for some $E\in \mathcal{E}$,
  and $x\in X$.
The family $\mathcal{B}_{X}$ of all bounded subsets of $X$  is a bornology on $X$.
We recall that a family $\mathcal{B}$  of subsets of a set $X$ is a {\it bornology}
if $\mathcal{B}$ contains the family $[X] ^{<\omega} $  of all finite subsets of $X$
 and $\mathcal{B}$  is closed   under finite unions and subsets. A bornology $\mathcal B$ on a set $X$ is called {\em unbounded} if $X\notin\mathcal B$.

Each subset $Y\subseteq X$ defines a {\it subbalean}  $(Y, \mathcal{E}|_{Y})$  of $(X, \mathcal{E})$,
 where $\mathcal{E}|_{Y}= \{ E \cap (Y\times Y): E \in \mathcal{E}\}$.
A  subbalean $(Y, \mathcal{E}|_{Y})$  is called  {\it large} if there exists $E\in \mathcal{E}$
 such that $X= E[Y]$, where $E[Y]=\bigcup _{y\in Y} E[y]$.

Let $(X, \mathcal{E})$, $(X^{\prime}, \mathcal{E}^{\prime})$
 be  balleans. A mapping $f: X \to X^{\prime}$ is called
  {\it coarse (or macrouniform) }  if, for every $E\in \mathcal{E}$, there
  exists $E^{\prime}\in \mathcal{E}$  such that $f(E(x))\subseteq  E^{\prime}(f(x))$
    for each $x\in X$.
If $f$ is a bijection such that $f$  and $f ^{-1 }$ are coarse then   $f  $  is called an {\it asymorphism}.
If  $(X, \mathcal{E})$ and  $(X^{\prime}, \mathcal{E}^{\prime})$  contains large  asymorphic  subballeans then they are called {\it coarsely equivalent.}

For coarse spaces  $(X_{\alpha}, \mathcal{E}_{\alpha})$, $\alpha < \kappa$, their product is the Cartesian product $X=\prod_{\alpha<\alpha}X_\alpha$ endowed with the coarse structure generated by the base consisting of the entourages $$\big\{\big((x_\alpha)_{\alpha<\kappa},(y_\alpha)_{\alpha<\kappa}\big)\in X\times X:\forall \alpha<\kappa\;\;(x_\alpha,y_\alpha)\in E_\alpha\big\},$$ where $(E_\alpha)_{\alpha<\kappa}\in\prod_{\alpha<\kappa}\mathcal E_\alpha$.

A class $\mathfrak{M}$ of balleans is called a {\em variety} if $\mathfrak{M}$
 is closed under formation of subballeans, coarse images and Cartesian products. For characterization    of all varieties of balleans, see \cite{b4}.

Given a family $\mathfrak{F}$ of subsets of $X\times X$, we denote by $\mathcal{E}$ the intersection of all  coarse  structures,  containing  each $F\cup \bigtriangleup _{X}$,  $F\in \mathfrak{F}$,   and say that  $\mathcal{E}$ is generated by $\mathfrak{F}$.
It is easy to see that $\mathcal{E}$ has a base of subsets of the form  $E _1 \circ E _{1}\circ  \ldots\circ E _{n}$ , where
$$E_1,\dots,E_n\in\{F\cup F^{-1}\cup \{(x,y)\}\cup\bigtriangleup_{X}: F\in\mathfrak{F},\;\;x,y\in X\}.$$

By a {\em pointed ballean} we shall understand a ballean $(X,\mathcal E)$ with a  distinguished point $e_*\in X$.

\section{ Metrizability and normality}

Every  metric $d$ on a set  $X$ defines the coarse  structure
$\mathcal{E} _{d}$  on $X$  with  the base $\{\{(x,y): d(x, y)< n\} : n\in \mathbb{N}\}$.
A ballean $(X, \mathcal{E})$ is called {\it metrizable}  if there is a metric $d$ on such that $\mathcal{E} = \mathcal{E} _{d}$.

\begin{theorem}[\cite{b5}]\label{t1}
A ballean $(X,\mathcal{E})$ is metrizable  if and only if $\mathcal{E}$  has a countable base.
\end{theorem}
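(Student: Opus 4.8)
The plan is to prove the two implications separately. The implication ``metrizable $\Rightarrow$ countable base'' is immediate: if $\mathcal E=\mathcal E_d$ for a metric $d$, then $\{\{(x,y):d(x,y)<n\}:n\in\mathbb{N}\}$ is by definition a countable base of $\mathcal E$. All the work is in the converse, which is a coarse analogue of the classical metrization theorem for uniform spaces.

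So assume $\mathcal E$ has a countable base $\{B_n:n\in\mathbb{N}\}$. First I would normalise the base. Using that $\mathcal E$ is closed under composition, inversion and finite unions (the latter because $E\cup E'\subseteq E\circ E'$ whenever $\Delta_X\subseteq E\cap E'$), and passing to a cofinal subsequence, I would produce an increasing chain of \emph{symmetric} entourages $\Delta_X=E_0\subseteq E_1\subseteq E_2\subseteq\cdots$ which is still a base of $\mathcal E$ and which in addition satisfies $E_n\circ E_n\subseteq E_{n+1}$ for every $n$ --- for instance by setting $E_0=\Delta_X$ and $E_{n+1}=\Delta_X\cup B_n\cup B_n^{-1}\cup(E_n\circ E_n)$, which is an entourage by the previous remarks.

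Next I would define a weight function $f\colon X\times X\to[0,\infty)$ by $f(x,x)=0$ and $f(x,y)=2^{n}$ for $x\neq y$, where $n=\min\{k:(x,y)\in E_k\}$; this minimum exists because $(X,\mathcal E)$ is connected, so every pair lies in some $E_k$. Then $f$ is symmetric, vanishes exactly on the diagonal, its sublevel sets are precisely the $\mathcal E$-balls ($\{y:f(x,y)\le 2^n\}=E_n[x]$), and it satisfies the weak triangle inequality $f(x,z)\le 2\max\{f(x,y),f(y,z)\}$: if $f(x,y)=2^n$ and $f(y,z)=2^m$ with $m\le n$, then $(x,y),(y,z)\in E_n$, hence $(x,z)\in E_n\circ E_n\subseteq E_{n+1}$ and $f(x,z)\le 2^{n+1}$. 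Now I would pass to the associated path metric $d(x,y)=\inf\{\sum_{i=1}^k f(x_{i-1},x_i):x=x_0,x_1,\dots,x_k=y\}$ and invoke the Frink chaining lemma, which for such an $f$ yields $\tfrac12 f(x,y)\le d(x,y)\le f(x,y)$ for all $x,y$. Connectedness makes $d$ finite, the lower estimate makes it a genuine metric, and symmetry and the triangle inequality for $d$ are clear from the definition.

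Finally, the estimates $\tfrac12 f\le d\le f$ translate into mutual cofinality of the two bases: each $\{(x,y):d(x,y)<r\}$ is contained in some $E_n$ (because $f\le 2d$), so $\mathcal E_d\subseteq\mathcal E$, while each $E_n=\{(x,y):f(x,y)\le 2^n\}$ is contained in $\{(x,y):d(x,y)<2^{n}+1\}$ (because $d\le f$), so $\mathcal E\subseteq\mathcal E_d$. Hence $\mathcal E=\mathcal E_d$ and the ballean is metrizable. The one genuinely delicate point --- and the step I expect to require the most care --- is the lower bound $d\ge\tfrac12 f$ in Frink's lemma, which is proved by induction on the length $k$ of the chain $x_0,\dots,x_k$; everything else is bookkeeping.
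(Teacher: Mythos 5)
First, a point of reference: the paper does not prove Theorem~\ref{t1} at all --- it is quoted from \cite{b5}, so there is no in-paper argument to compare against. Your overall strategy is the standard one for this result (it is the large-scale analogue of the Alexandroff--Urysohn--Frink metrization theorem, and is essentially how the proof goes in \cite{b5} and in Roe's book): the easy direction is indeed immediate, the normalization producing an increasing symmetric chain $(E_n)_{n\in\omega}$ that is still a base is correct (including the remark that $E\cup E'\subseteq E\circ E'$ when both contain $\bigtriangleup_{X}$), and the two-sided comparison of $\mathcal E_d$ with $\{E_n\}$ at the end is the right way to conclude.

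The one step that is not right as written is the appeal to ``Frink's chaining lemma'' to get $d\ge\tfrac12 f$. The classical chaining lemma requires a \emph{triple} composition condition (in your indexing, $E_n\circ E_n\circ E_n\subseteq E_{n+1}$, equivalently $f(x,w)\le 2\max\{f(x,y),f(y,z),f(z,w)\}$); your recursion only guarantees the double condition $E_n\circ E_n\subseteq E_{n+1}$, and with that hypothesis the standard induction on chain length does not close with the constant $\tfrac12$ (splitting a chain into three blocks costs two doublings, i.e.\ a factor $4$, per step). A positive lower bound $d\ge cf$ does survive for the double condition, but proving it needs a more careful (unbalanced-tree) composition argument and yields a worse constant. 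There are two painless repairs. Either build the triple condition into the recursion by setting $E_{n+1}=\bigtriangleup_{X}\cup B_n\cup B_n^{-1}\cup(E_n\circ E_n\circ E_n)$, after which the chaining lemma applies verbatim; or --- simpler, and exploiting that you are in the coarse rather than the uniform setting --- drop Frink entirely and observe that if $d(x,y)<m$ then there is a chain of total $f$-length $<m$, which has at most $m/2$ nontrivial links (each nontrivial link has $f$-value at least $2$), each lying in $E_{\lfloor\log_2 m\rfloor}$; hence $(x,y)$ belongs to a composition of at most $\lceil m/2\rceil$ copies of a fixed entourage, which is an entourage. This crude bound is all that is needed to show $\mathcal E_d\subseteq\mathcal E$ (and that $d(x,y)<2$ forces $x=y$, so $d$ is a genuine metric), so your proof is correct in substance once this step is restated.
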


Let $(X,\mathcal{E})$  be a ballean. A subset $U \subseteq X$  is called an
 {\it asymptotic neighbourhood  } of a subset $Y\subseteq X$ if, for every $E\in \mathcal{E}$ the set $E [Y] \setminus U$  is bounded.

Two subset $Y, Z$ of $X$  are called  {\it asymptotically disjoint  (separated)}  if, for every $E\in \mathcal{E}$,
  $E[Y] \cap E[Z] $  is bounded ($Y$ and $Z$ have disjoint asymptotic neighbourhoods).

A ballean $(X, \mathcal{E})$  is called {\it normal}  \cite{b6} if any
two asymptotically disjoint subsets of $X$  are asymptotically separated. Every ballean $(X, \mathcal{E})$
 with linearly ordered base of $\mathcal{E}$  is  normal.
In particular,   every  metrizable ballean is normal, see \cite{b6}.

A function $f: X\to \mathbb{R}$  is called   {\it slowly oscillating} if 
 for any $E\in \mathcal{E}$  and $\varepsilon > 0$,  there exists a bounded  subset $B$ of $X$  such that $\diam   f (E[x])<  \varepsilon $ for each $x\in X \setminus B$.

\begin{theorem}[\cite{b6}]\label{t2}
A ballean  $(X, \mathcal{E})$ is normal if and only if,
 for any two disjoint and asymptotically disjoint
subsets $Y, Z$ of $X$, there exists a slowly oscillating function  $f: X\to [0,1]$ such that $f(Y)\subset\{0\}$  and $f(Z)\subset\{1\}$.
\end{theorem}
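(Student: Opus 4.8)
The plan is to prove the two implications separately. Suppose first that for any disjoint asymptotically disjoint $Y,Z$ there is a slowly oscillating $f:X\to[0,1]$ with $f(Y)\subseteq\{0\}$ and $f(Z)\subseteq\{1\}$. Given such $Y,Z,f$, I would put $U:=f^{-1}\big([0,\tfrac13)\big)$ and $V:=f^{-1}\big((\tfrac23,1]\big)$, which are disjoint, and check that $U$ is an asymptotic neighbourhood of $Y$ (the case of $V,Z$ being symmetric). For $E\in\mathcal E$, slow oscillation with $\varepsilon=\tfrac13$ gives a bounded $B$ with $\diam f(E[x])<\tfrac13$ for $x\notin B$; since $f$ vanishes on $Y$, this forces $E[x]\subseteq U$ for every $x\in Y\setminus B$, hence $E[Y]\setminus U\subseteq E[Y\cap B]$, and the latter is bounded because thickenings of bounded sets are bounded ($Y\cap B\subseteq E'[a]$ implies $E[Y\cap B]\subseteq(E'\circ E)[a]$). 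Thus $Y,Z$ are asymptotically separated.

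For the converse I would first record the device that makes a single use of normality iterable --- and this is the only real obstacle: \emph{if $U$ is an asymptotic neighbourhood of a set $P$, then $P$ and $X\setminus U$ are asymptotically disjoint.} For a symmetric $E\in\mathcal E$ (which we may assume, replacing $E$ by $E\cup E^{-1}\subseteq E\circ E^{-1}$), the set $B:=(E\circ E)[P]\setminus U$ is bounded by the definition of asymptotic neighbourhood, and if $v\in E[P]\cap E[X\setminus U]$, say $(p,v)\in E$ and $(t,v)\in E$ with $p\in P$, $t\notin U$, then $(p,t)\in E\circ E$, so $t\in B$ and $v\in E[B]$; as $E[B]$ is bounded, so is $E[P]\cap E[X\setminus U]$. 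Consequently, any set $U$ disjoint from an asymptotic neighbourhood $W$ of $X\setminus Q$ is asymptotically disjoint from $X\setminus Q$ (apply the device to $X\setminus Q$ and use $U\subseteq X\setminus W$).

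These facts give the interpolation step: \emph{if $P\subseteq Q$ and $P$ is asymptotically disjoint from $X\setminus Q$, there is $R$ with $P\subseteq R\subseteq Q$ such that $P$ is asymptotically disjoint from $X\setminus R$ and $R$ is asymptotically disjoint from $X\setminus Q$.} Indeed, by normality choose disjoint asymptotic neighbourhoods $U$ of $P$ and $W$ of $X\setminus Q$ and set $R:=(U\cap Q)\cup P$; then $R\subseteq U\cup P$ and $X\setminus R\subseteq(X\setminus U)\cup(X\setminus Q)$, and the device above handles both required disjointnesses.

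Finally, given disjoint asymptotically disjoint $Y,Z$, I would recursively build sets $A_r$ indexed by the dyadic rationals $\mathbb D:=\{k/2^n:n\in\mathbb N,\ 0\le k\le 2^n\}$ so that $A_0=Y$, $A_1=X\setminus Z$, $A_r\subseteq A_s$ for $r\le s$, and $A_r$ is asymptotically disjoint from $X\setminus A_s$ for every consecutive pair $r<s$ of a common level; the level $\{0,1\}$ is exactly the hypothesis on $Y,Z$, and each new level is obtained by applying the interpolation step to every consecutive pair of the preceding one. Then $f(x):=\inf\big(\{1\}\cup\{r\in\mathbb D\cap[0,1):x\in A_r\}\big)$ defines $f:X\to[0,1]$ with $f(Y)\subseteq\{0\}$ and $f(Z)\subseteq\{1\}$ (the latter since $A_r\subseteq A_1=X\setminus Z$ for $r<1$). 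For slow oscillation, given symmetric $E$ and $\varepsilon>0$ I pick $n$ with $2^{-n+1}<\varepsilon$ and let $B$ be the finite, hence bounded, union over the $2^n$ consecutive level-$n$ pairs of the bounded sets $E[A_{k/2^n}]\cap E[X\setminus A_{(k+1)/2^n}]$; if $x\notin B$ and $y,y'\in E[x]$ had $f(y')-f(y)>2^{-n+1}$, some consecutive level-$n$ pair $k/2^n<(k+1)/2^n$ would lie strictly between $f(y)$ and $f(y')$, forcing $y\in A_{k/2^n}$, $y'\in X\setminus A_{(k+1)/2^n}$ and (by symmetry of $E$) $x\in E[y]\cap E[y']\subseteq B$, a contradiction; hence $\diam f(E[x])\le 2^{-n+1}<\varepsilon$ for $x\notin B$.
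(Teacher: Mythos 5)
The paper does not prove this statement --- it is quoted from reference \cite{b6} --- so there is no in-paper argument to compare against. Your proof is correct and is essentially the standard Urysohn-lemma-type argument adapted to the coarse setting: the easy direction via preimages $f^{-1}([0,\frac13))$ and $f^{-1}((\frac23,1])$, and the hard direction via a dyadic family of sets obtained by iterating normality, with the key lemma being that an asymptotic neighbourhood $U$ of $P$ makes $P$ and $X\setminus U$ asymptotically disjoint; all the individual steps (the device, the interpolation, the monotone dyadic scale, the oscillation estimate $\diam f(E[x])\le 2^{-n+1}$ off a finite union of bounded sets) check out. One small point worth a line: the paper defines normality as asymptotic separation of \emph{all} asymptotically disjoint pairs, not only disjoint ones, so in the first implication you should also treat a pair $Y,Z$ with $Y\cap Z$ bounded but nonempty --- apply your argument to $Y\setminus Z$ and $Z$ and observe that an asymptotic neighbourhood of $Y\setminus Z$ is automatically one of $Y$, since $E[Y]\setminus U\subseteq (E[Y\setminus Z]\setminus U)\cup E[Y\cap Z]$ and the thickening of the bounded set $Y\cap Z$ is bounded.
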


For any unbounded bornology $\mathcal{B}$ on  a set $X$ the cardinals
$$
\begin{aligned}
&\add(\mathcal B)=\min\{\mathcal A\subset\mathcal B:\bigcup\mathcal A\notin\mathcal B\},\\
&\cof(\mathcal B)=\min\{\mathcal C\subset\mathcal B:\forall B\in\mathcal B\;\;\exists C\in\mathcal C\;\;B\subset C\}\mbox{ and}\\
&\cov(\mathcal B)=\min\{|\mathcal C|:\mathcal C\subset\mathcal B,\;\bigcup\mathcal C=X\}
\end{aligned}
$$
are called the {\em additivity}, the {\em cofinality}, and the {\em covering number}  of $\mathcal B$, respectively. It is well-known (and easy to see) that $\add(\mathcal B)\le\cov(\mathcal B)\le\cof(\mathcal B)$.

The following  theorem was proved in \cite[1.4]{b3}.

\begin{theorem}\label{t3} If the product $X\times Y$  of balleans $X, Y$  is normal  then $$\add(\mathcal{B} _{X})=\cof(\mathcal B_X)=\cof(\mathcal B_Y)=\add(\mathcal B_Y).$$
\end{theorem}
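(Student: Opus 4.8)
We may assume that $X$ and $Y$ are unbounded, otherwise the statement is vacuous. Since $\add(\mathcal B)\le\cov(\mathcal B)\le\cof(\mathcal B)$ for every unbounded bornology, it suffices to prove the single inequality $\cof(\mathcal B_X)\le\add(\mathcal B_Y)$: applying it with $X$ and $Y$ interchanged gives $\cof(\mathcal B_Y)\le\add(\mathcal B_X)$, whence
$$\add(\mathcal B_X)\le\cof(\mathcal B_X)\le\add(\mathcal B_Y)\le\cof(\mathcal B_Y)\le\add(\mathcal B_X),$$
so all four cardinals coincide. I would prove $\cof(\mathcal B_X)\le\add(\mathcal B_Y)$ contrapositively: assuming $\kappa:=\add(\mathcal B_Y)<\cof(\mathcal B_X)$, produce two disjoint, asymptotically disjoint subsets of $X\times Y$ that are not asymptotically separated — equivalently, by Theorem \ref{t2}, not separated by any slowly oscillating function — contradicting normality.

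For the two sets I would use the ``coordinate axes''. Fix points $e\in X$, $e'\in Y$ and put $S_0=\{e\}\times Y$ and $S_1=(X\setminus\{e\})\times\{e'\}$. For any basic product entourage $E\times F$ one has $(E\times F)[S_0]\subseteq E[e]\times Y$ and $(E\times F)[S_1]\subseteq X\times F[e']$, hence $(E\times F)[S_0]\cap(E\times F)[S_1]\subseteq E[e]\times F[e']$, a bounded subset of $X\times Y$; so $S_0$ and $S_1$ are disjoint and asymptotically disjoint, with no assumption on the bornologies. By normality and Theorem \ref{t2} there is a slowly oscillating $f\colon X\times Y\to[0,1]$ with $f\equiv 0$ on $S_0$ and $f\equiv 1$ on $S_1$. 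Now extract bornological data from $f$: for each symmetric entourage $E$ of $X$, applying the slowly oscillating condition to $E\times\triangle_Y$ with $\varepsilon=\tfrac14$ yields bounded $P_E\subseteq X$, $Q_E\subseteq Y$ with $\diam f(E[x]\times\{y\})<\tfrac14$ whenever $x\notin P_E$ or $y\notin Q_E$. Taking $y=e'$: if $x\notin P_E$ and $x\ne e$, then $e\in E[x]$ would force $\{0,1\}=\{f(e,e'),f(x,e')\}\subseteq f(E[x]\times\{e'\})$, impossible; as $E$ is symmetric this means $E[e]\subseteq P_E\cup\{e\}$. Since the balls $E[e]$ ($E$ symmetric) are cofinal in $\mathcal B_X$, the family $\{P_E\}$ is cofinal in $\mathcal B_X$; a parallel argument in the $Y$-direction (using that $f$ is slowly oscillating there and $f\equiv 1$ on $S_1$) shows $\{Q_E\}$ is cofinal in $\mathcal B_Y$.

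It remains to produce a cofinal subfamily of $\mathcal B_X$ of size $\le\kappa$, contradicting $\cof(\mathcal B_X)>\kappa$. Fix a strictly increasing, continuous chain $(B_\xi)_{\xi<\kappa}$ in $\mathcal B_Y$ with $\bigcup_{\xi<\kappa}B_\xi\notin\mathcal B_Y$ (such a chain exists because $\add(\mathcal B_Y)=\kappa$ is regular and unions of fewer than $\kappa$ bounded sets are bounded), and for each $\xi$ choose a symmetric entourage $E_\xi$ of $X$ with $B_\xi\subseteq Q_{E_\xi}$, using cofinality of $\{Q_E\}$. The claim to be established is that $\{P_{E_\xi}:\xi<\kappa\}$ is already cofinal in $\mathcal B_X$: if some bounded $A\subseteq X$ were contained in no $P_{E_\xi}$, choosing $a_\xi\in A\setminus P_{E_\xi}$ gives $\diam f(E_\xi[a_\xi]\times\{y\})<\tfrac14$ for all $y$, and feeding the $B_\xi$ (whose union is unbounded in $Y$) together with the boundedness of $A$ into the slow oscillation of $f$ should force a ball on which $f$ jumps across all of $[0,1]$ — a contradiction.

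Making this last implication precise is the principal obstacle. It is a statement about slowly oscillating functions on a product of possibly \emph{non-metrizable} balleans, where one cannot simply intersect the exceptional bounded sets $Q_E$ over all scales $E$; the argument must genuinely exploit the asymmetry that $\kappa$ bounded subsets of $Y$ already exhaust an unbounded set while $\mathcal B_X$ needs strictly more than $\kappa$, synchronising the $\kappa$ scales $(B_\xi)$ in $Y$ with the entourages of $X$ at which the slow oscillation of $f$ is tested. An alternative to the axes is the ``staircase'' $S_1=\{(x_\xi,y_\xi):\xi<\kappa\}$ against $S_0=\{x_0\}\times\{y_\xi:\xi<\kappa\}$, with $(y_\xi)$ a transversal of $(B_\xi)$ and $(x_\xi)$ leaving every bounded subset of $X$ with multiplicity $<\kappa$; asymptotic disjointness is then immediate, but one has traded the obstacle above for extracting such a diverging family from $\cof(\mathcal B_X)>\kappa$. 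The remaining ingredients — regularity of $\add(\mathcal B_Y)$ and taking the witnessing chain strictly increasing and continuous — are routine.
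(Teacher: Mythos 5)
You should first note that the paper does not actually prove Theorem~\ref{t3} --- it is quoted from \cite[1.4]{b3} --- so the comparison below is with the standard argument rather than with a proof contained in this text. Your reduction (it suffices to show $\cof(\mathcal B_X)\le\add(\mathcal B_Y)$ and appeal to symmetry), your choice of the two coordinate axes as the disjoint, asymptotically disjoint pair, and the cardinal bookkeeping (regularity of $\add$, the chain $(B_\xi)_{\xi<\kappa}$ with unbounded union) are all correct and are exactly the right ingredients. But the step you yourself flag as ``the principal obstacle'' is a genuine gap, and the way you set it up cannot be completed: you index your candidate cofinal family $\{P_{E_\xi}\}$ by entourages $E_\xi$ of $X$ chosen so that $B_\xi\subseteq Q_{E_\xi}$, where $Q_E$ is merely the $Y$-projection of the exceptional bounded set attached to $E\times\bigtriangleup_Y$. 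There is no reason for the family $\{Q_E\}$, with $E$ ranging over entourages of $X$, to be cofinal in $\mathcal B_Y$ (these are error sets which may a priori be taken empty; enlarging them only weakens the statement), and even after enlarging them no contradiction falls out, because your setup never invokes the entourages of $Y$ at all.

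That is precisely the missing ingredient, and with it your own construction closes in a few lines. For each symmetric entourage $F$ of $Y$ apply the slow oscillation of $f$ to $\bigtriangleup_X\times F$ with $\varepsilon=\frac{1}{4}$ to get bounded sets $P'_F\subseteq X$, $Q'_F\subseteq Y$ such that $f>\frac{3}{4}$ on $\{x\}\times F[e']$ for every $x\notin P'_F\cup\{e\}$ (take $y=e'$ and use $f(x,e')=1$). Choose for each $\xi<\kappa$ a symmetric entourage $F_\xi$ of $Y$ with $B_\xi\subseteq F_\xi[e']$; then $\{P'_{F_\xi}\cup\{e\}\}_{\xi<\kappa}$ is cofinal in $\mathcal B_X$, which gives $\cof(\mathcal B_X)\le\kappa$ directly. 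Indeed, given a bounded set $A\subseteq E[e]$ with $E$ symmetric, your sets $P_E,Q_E$ yield $f<\frac{1}{4}$ on $E[e]\times\{y\}$ for every $y\notin Q_E$ (since $f(e,y)=0$); as $\bigcup_{\xi<\kappa}B_\xi$ is unbounded and $Q_E$ is bounded, pick $\xi$ and $y\in B_\xi\setminus Q_E$; then $f(x,y)<\frac{1}{4}$ for all $x\in E[e]$ while $f(x,y)>\frac{3}{4}$ for all $x\notin P'_{F_\xi}\cup\{e\}$, whence $E[e]\subseteq P'_{F_\xi}\cup\{e\}$. (Equivalently, one can dispense with slowly oscillating functions altogether and run the same computation with disjoint asymptotic neighbourhoods $U$ of $\{e\}\times Y$ and $V$ of $X\times\{e'\}$, which is how the argument is usually presented; the function $f$ only enters as a substitute for the pair $U=f^{-1}\big([0,\tfrac12)\big)$, $V=f^{-1}\big((\tfrac12,1]\big)$.)
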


 \begin{theorem}\label{t4} Let $X$ be the Cartesian product of a family  $\mathcal{F}$ of metrizable balleans.
Then the following statements are equivalent:
\begin{enumerate}
\item $X$  is metrizable;
\item $X$  is normal;
\item all but finitely many balleans from $\mathcal{F}$  are bounded.
\end{enumerate}
\end{theorem}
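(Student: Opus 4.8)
The plan is to establish $(1)\Rightarrow(2)$, $(3)\Rightarrow(1)$, and the contrapositive of $(2)\Rightarrow(3)$. The first implication is free: a metrizable ballean has a linearly ordered base (the balls of integer radius of a defining metric), hence is normal by the remarks preceding Theorem~\ref{t2}.

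For $(3)\Rightarrow(1)$ I would show that $X$ has a countable base and invoke Theorem~\ref{t1}. A ballean $(Z,\mathcal E_Z)$ is bounded exactly when $Z\times Z\in\mathcal E_Z$, in which case $\{Z\times Z\}$ is a base of $\mathcal E_Z$; consequently the product of the (all but finitely many) bounded members of $\mathcal F$ is again bounded and carries a one-element base. Each of the remaining finitely many members is metrizable, hence has a countable base by Theorem~\ref{t1}. Since the products of base entourages of the factors form a base of the product coarse structure, and a finite product of countable families is countable, $X$ has a countable base, so $X$ is metrizable.

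For the contrapositive of $(2)\Rightarrow(3)$, suppose infinitely many members of $\mathcal F$ are unbounded, and split $\mathcal F=\mathcal F_0\sqcup\mathcal F_1$ so that each $\mathcal F_i$ still contains infinitely many unbounded balleans. Writing $A=\prod\mathcal F_0$ and $B=\prod\mathcal F_1$, the product $X$ is asymorphic to $A\times B$ (and normality is an asymorphism invariant). I claim that $\add(\mathcal B_A)=\aleph_0<\cof(\mathcal B_A)$, and symmetrically for $B$. Granting this, if $X$ were normal then Theorem~\ref{t3} applied to $A\times B$ would force $\add(\mathcal B_A)=\cof(\mathcal B_A)$, a contradiction; hence $X$ is not normal.

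It remains to verify the claim, which is the heart of the matter. Fix a point $y_*\in A$ and distinct indices $\alpha_0,\alpha_1,\dots$ of unbounded factors of $A$. For each $k$, take the balls $B^{(j)}_{\alpha_k}$ ($j\in\omega$) of integer radius about $(y_*)_{\alpha_k}$ in a metric defining the $\alpha_k$-th factor, so that $\bigcup_j B^{(j)}_{\alpha_k}$ is all of that factor. The set $A_{k,j}$ of points of $A$ that coincide with $y_*$ off the coordinate $\alpha_k$ and whose $\alpha_k$-th coordinate lies in $B^{(j)}_{\alpha_k}$ is bounded, being contained in a product of bounded sets; but the countable union $\bigcup_{k,j}A_{k,j}$ projects onto the whole unbounded factor indexed by $\alpha_0$, hence is unbounded, which gives $\add(\mathcal B_A)\le\aleph_0$ (the reverse inequality being automatic). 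For the cofinality, given any countable family $\{C_k:k\in\omega\}\subseteq\mathcal B_A$, each $C_k$ is contained in a ``box'' $\prod_\beta D^k_\beta$ with $D^k_\beta\in\mathcal B_{Y_\beta}$; since the $\alpha_k$-th factor is unbounded we may choose a point $t_k$ in it lying outside $D^k_{\alpha_k}$, and then the box whose $\alpha_k$-th coordinate is $\{(y_*)_{\alpha_k},t_k\}$ for every $k$ and a singleton in all other coordinates is a bounded subset of $A$ contained in no $C_k$; thus no countable family is cofinal, i.e.\ $\cof(\mathcal B_A)>\aleph_0$. The main obstacle is precisely this bornological computation for the infinite product $A$: it rests on the two facts that a box $\prod_\beta D_\beta$ of bounded sets is a bounded subset of $A$ and that an unbounded factor contains no bounded set equal to the whole factor — both immediate from the description of the product coarse structure — and it forces one to arrange, via the splitting of $\mathcal F$, that $A$ retains infinitely many unbounded factors, since with only finitely many one would have $\cof(\mathcal B_A)=\aleph_0$ and no contradiction.
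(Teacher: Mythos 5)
Your proof is correct and follows essentially the same route as the paper: the only implication requiring real work is $(2)\Rightarrow(3)$, which both arguments settle by exhibiting a two-factor product decomposition to which Theorem~\ref{t3} applies and then deriving the contradiction $\add(\mathcal B)=\aleph_0<\cof(\mathcal B)$ via a diagonal argument on boxes of bounded sets. Your splitting of $\mathcal F$ into two halves each containing infinitely many unbounded members is a slight refinement of the paper's extraction of a countable subfamily, since it keeps the entire product intact and thereby avoids having to justify that normality passes to subproducts.
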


\begin{proof} We need only to show $(2)\Rightarrow  (3)$.
Assume the contrary.  Then there exists a family  $(Y_{n})_{n< \omega}$  of unbounded
metrizable balleans such that the Cartesian product $Y=\prod_{n\in\omega} Y_{n}$
  is normal.  On the other hand, $\add(\mathcal B_Y)\le\add(\mathcal B_{Y_0})=\aleph_0$ and a standard diagonal  argument shows that $\cof(\mathcal{B}_{Y})>\aleph_{0}$,    contradicting Theorem~\ref{t3}. 
\end{proof}

\section{Bornological products}

Let $\{(X_{\alpha}, \mathcal{E}_{\alpha}): \alpha\in A\}$
 be an indexed family of pointed balleans and let $\mathcal{B}$  be a bornology on the index set $A$. For each $\alpha\in A$ by $e_\alpha$ we denote the distinguished point of the ballean $X_\alpha$.

The {\em $\mathcal B$-product} of the family of pointed balleans $\{X_\alpha:\alpha\in A\}$ is the set 
$$X_{\mathcal{B}}=\big\{(x_{\alpha})_{\alpha\in A}\in \prod_{\alpha\in A} X_{\alpha}: \{\alpha\in A:x_{\alpha}\ne e_{\alpha}\}\in\mathcal B\big\},$$
 endowed with the coarse structure $\mathcal E_{\mathcal B}$, generated by the base consisting of the entourages 
$$\big\{\big((x_\alpha)_{\alpha\in A},(y_\alpha)_{\alpha\in A}\big)\in X_{\mathcal B}\times X_{\mathcal B}:\forall \alpha\in B\;\;(x_\alpha,y_\alpha)\in E_\alpha\big\}$$where $B\in\mathcal B$ and $(E_\alpha)_{\alpha\in B}\in\prod_{\alpha\in B}\mathcal E_\alpha$.

%The coarse space $(X_{\mathcal B},\mathcal E_{\mathcal B})$ is called the {\em $\mathcal B$-product} of the family of coarse spaces $\{(X_\alpha,\mathcal E_\alpha):\alpha\in A\}$. By the definition,  the $\mathcal{B}$-product depends  on the    choice of the base point
%$(e_{\alpha})_{\alpha\in A}\in\prod_{\alpha\in A}X_\alpha$, but all $\mathcal{B}$-products of
%$\{(X_{a}, \mathcal{E}_{a}): a\in A\}$  are asymorphic.

For the bornology  $\mathcal{B}=\mathcal{P}_{A}$ consisting of all subsets of the index set $A$, the $\mathcal{B}$-product $X_{\mathcal B}$ coincides with the Cartesian product
$\prod_{\alpha\in A}( X_a , \mathcal{E}_{a})$ of coarse spaces $(X_\alpha,\mathcal E_\alpha)$.

If each $X_{\alpha}$  is the doubleton $\{0,1\}$ with distnguished point  $e_{\alpha}=0$, then
the $\mathcal{B}$-product
  is called the {\it $\mathcal{B}$-macrocube } on $A$.
If $|A|= \omega$  and  $\mathcal{B}=[A]^{< \omega}$  then  we get the well-known  Cantor macrocube, whose coarse characterization was given by Banakh and Zarichnyi in \cite{b10}.

For relations between macrocubes and hyperballeans, see \cite{b8}, \cite{b9}.

\begin{theorem}\label{t5}
Let $\mathcal{B}$ be a bornology on a set  and let
$X_{\mathcal{B}}$ be the $\mathcal{B}$-product
 of a family  of unbounded metrizable pointed balleans.  Then the following statements are equivalent:
\begin{enumerate}
\item $X_{\mathcal{B}}$ is metrizable;
\item $X_{\mathcal{B}}$ is normal;
\item $|A|= \omega$ and $\mathcal{B} = [A]^{<\omega}$.
\end{enumerate}
\end{theorem}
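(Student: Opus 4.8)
The plan is to prove the cycle of implications $(3)\Rightarrow(1)\Rightarrow(2)\Rightarrow(3)$; the first two are routine, and $(2)\Rightarrow(3)$ carries the weight of the theorem. For $(3)\Rightarrow(1)$, suppose $A=\{a_n:n\in\omega\}$ and $\mathcal B=[A]^{<\omega}$. By Theorem~\ref{t1} each $\mathcal E_\alpha$ has an increasing countable base $\{E^\alpha_n:n\in\omega\}$; for $n\in\omega$ let $G_n$ be the basic entourage of $X_{\mathcal B}$ indexed by the finite set $\{a_0,\dots,a_n\}$ and the entourages $E^{a_0}_n,\dots,E^{a_n}_n$. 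Every basic entourage of $\mathcal E_{\mathcal B}$ is indexed by a \emph{finite} $B\subseteq A$ and entourages $E_\alpha\in\mathcal E_\alpha$ ($\alpha\in B$), hence is contained in $G_n$ whenever $B\subseteq\{a_0,\dots,a_n\}$ and $E_\alpha\subseteq E^\alpha_n$ for all $\alpha\in B$; so $\{G_n:n\in\omega\}$ is a countable base of $\mathcal E_{\mathcal B}$ and $X_{\mathcal B}$ is metrizable by Theorem~\ref{t1}. The implication $(1)\Rightarrow(2)$ is immediate, since every metrizable ballean is normal.

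Now assume $X_{\mathcal B}$ is normal; the statement is easily checked when $A$ is finite, so assume $A$ infinite and fix $a_0\in A$. The first thing to verify is the product decomposition $X_{\mathcal B}\cong X_{a_0}\times X'$, where $X'$ is the $\mathcal B'$-product of $\{X_\alpha:\alpha\in A\setminus\{a_0\}\}$ for the bornology $\mathcal B'=\{B\setminus\{a_0\}:B\in\mathcal B\}$ on $A\setminus\{a_0\}$; this holds because a subset of $A$ lies in $\mathcal B$ iff its intersection with $A\setminus\{a_0\}$ does, and because the generating entourages of $\mathcal E_{\mathcal B}$ split at the coordinate $a_0$. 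Since $A\setminus\{a_0\}$ is still infinite, the coordinate projection of $X'$ onto any factor $X_\beta$ is a coarse surjection onto an unbounded ballean, so $X'$ is unbounded; and $X_{a_0}$ is unbounded and metrizable, so $\cof(\mathcal B_{X_{a_0}})=\aleph_0$. Theorem~\ref{t3} applied to the normal product $X_{a_0}\times X'$ now gives $\cof(\mathcal B_{X'})=\cof(\mathcal B_{X_{a_0}})=\aleph_0$, and since the bounded subsets of a product are the subsets of products of bounded sets of the factors, $\cof(\mathcal B_{X_{\mathcal B}})\le\cof(\mathcal B_{X_{a_0}})\cdot\cof(\mathcal B_{X'})=\aleph_0$.

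From $\cof(\mathcal B_{X_{\mathcal B}})\le\aleph_0$ I would then deduce $(3)$ in two steps. First, $\mathcal B$ contains no infinite set: if $S\in\mathcal B$ were infinite, then (since $\mathcal B$ is downward closed, so $\mathcal P_S\subseteq\mathcal B$) the subballean $Z_S=\{x\in X_{\mathcal B}:x_\alpha=e_\alpha\text{ for all }\alpha\notin S\}$ with its induced coarse structure would be asymorphic to the Cartesian product $\prod_{\alpha\in S}X_\alpha$ of unbounded balleans, for which $\cof(\mathcal B_{Z_S})>\aleph_0$ by the diagonal argument used in the proof of Theorem~\ref{t4}; but $\mathcal B_{Z_S}$ is the trace of $\mathcal B_{X_{\mathcal B}}$ on $Z_S$, so $\cof(\mathcal B_{Z_S})\le\cof(\mathcal B_{X_{\mathcal B}})\le\aleph_0$, a contradiction. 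Hence $\mathcal B=[A]^{<\omega}$. Second, $|A|\le\aleph_0$: any cofinal family $\mathcal C$ of $\mathcal B_{X_{\mathcal B}}$ gives a cofinal family of $[A]^{<\omega}$, because each $C\in\mathcal C$ lies inside a ball indexed by a finite set $B_C\subseteq A$ and a center, and for each $B\in[A]^{<\omega}$ the bounded set $\prod_{\alpha\in B}\{e_\alpha,f_\alpha\}$ (with $f_\alpha\ne e_\alpha$) lies inside some $C\in\mathcal C$, forcing $B\subseteq B_C$ since outside $B_C$ every point of the ball agrees with its center. Therefore $|A|=\cof([A]^{<\omega})\le\cof(\mathcal B_{X_{\mathcal B}})\le\aleph_0$, so $|A|=\omega$ as $A$ is infinite, which is $(3)$.

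The step I expect to be most delicate is the bornological bookkeeping behind $(2)\Rightarrow(3)$: checking the decomposition $X_{\mathcal B}\cong X_{a_0}\times X'$ literally, verifying that $\mathcal B_{Z_S}$ is exactly the trace of $\mathcal B_{X_{\mathcal B}}$ on $Z_S$ so that cofinality cannot increase when passing to $Z_S$, and above all confirming that the hypotheses of Theorem~\ref{t3} are met, i.e., that both $X_{a_0}$ and $X'$ have unbounded bornology --- which is precisely the point at which the infinitude of $A$ is used.
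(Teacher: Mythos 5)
Your proof is correct and takes essentially the same route as the paper, whose entire argument for the key implication is the instruction ``repeat the proof of Theorem~\ref{t4}'': your decomposition $X_{\mathcal B}\cong X_{a_0}\times X'$, the appeal to Theorem~\ref{t3} to force $\cof(\mathcal B_{X_{\mathcal B}})\le\aleph_0$, and the diagonal/cofinality counting that rules out infinite members of $\mathcal B$ and uncountable $A$ are exactly the intended adaptation, carried out in full detail (and you also supply the routine implications $(3)\Rightarrow(1)\Rightarrow(2)$ that the paper omits). The one caveat is the boundary case you wave at: for finite $A$ the equivalence as literally stated fails (a finite product of unbounded metrizable balleans is metrizable while $|A|\ne\omega$), but this is a defect of the theorem's formulation, which implicitly assumes $A$ infinite, rather than of your argument.
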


\begin{proof}
To  see that  $(2) \Rightarrow (3)$,  repeat the proof of  Theorem \ref{t4}.
\end{proof}

\begin{theorem}\label{t6} Let  $\mathcal{B}$  be a bornology on a set $A$  and let
 $X_{\mathcal{B}} $ be the $\mathcal{B}$-product of a family $\{X_\alpha:\alpha\in A\}$ of bounded pointed balleans which are not singletons. The coarse space $X_{\mathcal{B}} $ is metrizable if and only if the bornology $\mathcal{B}$  has a countable base.
\end{theorem}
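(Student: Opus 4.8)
The plan is to reduce both implications to Theorem~\ref{t1} by showing that $\mathcal E_{\mathcal B}$ has a countable base if and only if the bornology $\mathcal B$ does; more precisely, I would exhibit an order isomorphism of $(\mathcal B,\subseteq)$ onto a cofinal subfamily of $(\mathcal E_{\mathcal B},\subseteq)$, so that the two posets have the same cofinality, and then invoke Theorem~\ref{t1} on one side and the definition of a base of a bornology on the other.

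First I would use the boundedness hypothesis. Since each $X_\alpha$ is bounded, $X_\alpha=E_\alpha[x_\alpha]$ for some $E_\alpha\in\mathcal E_\alpha$ and $x_\alpha\in X_\alpha$, whence $X_\alpha\times X_\alpha\subseteq E_\alpha^{-1}\circ E_\alpha\in\mathcal E_\alpha$; thus $X_\alpha\times X_\alpha$ is the largest entourage of $\mathcal E_\alpha$. Consequently, in the defining base of $\mathcal E_{\mathcal B}$ every radius may be enlarged to the maximal one, and the family $\{\widehat E_B:B\in\mathcal B\}$, where
$$\widehat E_B=\big\{\big((x_\alpha)_{\alpha\in A},(y_\alpha)_{\alpha\in A}\big)\in X_{\mathcal B}\times X_{\mathcal B}: x_\alpha=y_\alpha\ \text{for all}\ \alpha\in A\setminus B\big\},$$
is a base of $\mathcal E_{\mathcal B}$ (each defining entourage with parameter $B$ is contained in $\widehat E_B$, which is itself a defining entourage). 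Clearly $B\subseteq B'$ implies $\widehat E_B\subseteq\widehat E_{B'}$.

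The heart of the proof is the reverse monotonicity: $\widehat E_B\subseteq\widehat E_{B'}$ implies $B\subseteq B'$, and this is exactly where the hypothesis that no $X_\alpha$ is a singleton is used. Given $\alpha_0\in B\setminus B'$, I would choose $c\in X_{\alpha_0}\setminus\{e_{\alpha_0}\}$ and let $x\in X_{\mathcal B}$ be the point with $x_{\alpha_0}=c$ and $x_\alpha=e_\alpha$ for $\alpha\ne\alpha_0$ (it belongs to $X_{\mathcal B}$ since its support is the singleton $\{\alpha_0\}$). Writing $e=(e_\alpha)_{\alpha\in A}\in X_{\mathcal B}$, one checks $(x,e)\in\widehat E_B$ but $(x,e)\notin\widehat E_{B'}$, so $\widehat E_B\not\subseteq\widehat E_{B'}$. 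Hence $B\mapsto\widehat E_B$ is an order isomorphism of $(\mathcal B,\subseteq)$ onto the cofinal subfamily $\{\widehat E_B:B\in\mathcal B\}$ of $(\mathcal E_{\mathcal B},\subseteq)$.

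With this in hand both directions become routine bookkeeping with bases of coarse structures. If $\mathcal B$ has a countable base $\{B_n:n\in\omega\}$, then $\{\widehat E_{B_n}:n\in\omega\}$ is cofinal in $\{\widehat E_B:B\in\mathcal B\}$, hence a countable base of $\mathcal E_{\mathcal B}$, so $X_{\mathcal B}$ is metrizable by Theorem~\ref{t1}. Conversely, if $X_{\mathcal B}$ is metrizable, then by Theorem~\ref{t1} the structure $\mathcal E_{\mathcal B}$ has a countable base $\{C_n:n\in\omega\}$; for each $n$ pick $B_n\in\mathcal B$ with $C_n\subseteq\widehat E_{B_n}$, and then for an arbitrary $B\in\mathcal B$ the entourage $\widehat E_B$ lies in some $C_n$, so $\widehat E_B\subseteq C_n\subseteq\widehat E_{B_n}$ and therefore $B\subseteq B_n$ by the reverse monotonicity; thus $\{B_n:n\in\omega\}$ is a countable base of $\mathcal B$. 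The only genuinely nontrivial step is the reverse monotonicity — i.e. that the bornology is faithfully encoded in $\mathcal E_{\mathcal B}$ — and it is precisely there that the non-singleton hypothesis is essential.
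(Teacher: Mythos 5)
Your proof is correct and takes the same route as the paper, whose entire proof is the single line ``Apply Theorem~\ref{t1}.'' You simply supply the details that the paper leaves implicit: that boundedness of each $X_\alpha$ makes $\{\widehat E_B:B\in\mathcal B\}$ a cofinal family in $\mathcal E_{\mathcal B}$, and that the non-singleton hypothesis makes $B\mapsto\widehat E_B$ an order isomorphism, so that $\mathcal E_{\mathcal B}$ has a countable base exactly when $\mathcal B$ does.
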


\begin{proof} Apply Theorem~\ref{t1}.
\end{proof}

Let $X$  be a macrocube  on a set $A$ and $Y$  be a macrocube on a set $B$,
$A \cap B = \emptyset$.
Then $X \times Y$ is a macrocube  on $X\cup Y$  and, by Theorem 3,
 $X\times Y$  needs not to be normal.

\begin{question}\label{q1}
How can one detect whether a given macrocube is normal?
Is a  $\mathcal{B}$-macrocube on an infinite set $A$ normal  provided that $\mathcal{B}\ne\mathcal P_A$  is a maximal unbounded bornology on $A$?
\end{question}

Let $\{X_{n}: n<\omega \}$
 be a family of finite balleans, $\mathcal{B}=[ \omega ]^{<\omega} $.
By [10], the  $\mathcal{B}$-product of the family
$\{X_{n}: n<\omega \}$ is coarsely  equivalent to the Cantor macrocube.
\vspace{5 mm}

\begin{question}\label{q2} Let $\{X_{\alpha}: \alpha\in A\}$  be a family of finite (bounded) pointed balleans and let $\mathcal{B}$ be a bornology on $A$.
How can one detect whether a $\mathcal{B}$-product  of $\{X_{\alpha}:\alpha\in A\}$
 is coarsely equivalent to some macrocube?
\end{question}

\section{Bouquets}

Let $\mathcal B$ be a bornology on a set  $A$   and  let
$\{(X _{\alpha}, \mathcal{E} _{\alpha}) : \alpha\in A\}$ be a   family of pointed balleans. The subballean
$$\bigvee_{\alpha\in A}X_\alpha:=\big\{(x_\alpha)_{\alpha\in A}\in X_{\mathcal B}:|\{\alpha\in A:x_\alpha\ne e_\alpha\}|\le 1\big\}$$of the $\mathcal B$-product $X_{\mathcal B}$ is called the {\em $\mathcal B$-bouquet} of the family  $\{(X_\alpha,\mathcal E_\alpha):\alpha\in A\}$.
The point $e=(e_\alpha)_{\alpha\in A}$ is the distinguished point of the ballean $\bigvee_{\alpha\in A}X_\alpha$.

For every $\alpha\in A$ we identify the ballean $X_\alpha$ with the subballean $\{(x_\beta)_{\beta\in A}\in X_{\mathcal B}:\forall \beta\in A\setminus \{\alpha\}\;\;x_\beta=e_\beta\}$ of $\bigvee_{\alpha\in A}X_\alpha$. Under such identification $\bigvee_{\alpha\in A}X_\alpha=\bigcup_{\alpha\in A}X_\beta$ and $X_\alpha\cap X_\beta=\{e\}=\{e_\alpha\}=\{e_\beta\}$ for any distinct indices $\alpha,\beta\in A$.

Applying Theorem 1, we get the following two  theorems.

\begin{theorem}\label{t7}
Let $\mathcal{B}$  be a bornology on a set $A$  and let
$\{X_{\alpha}: \alpha\in A\}$
 be a family of unbounded pointed metrizable balleans.
The $\mathcal{B}$-bouquet $\bigvee_{\alpha\in A}X_\alpha$ 
 is  metrizable if and only if $|A|= \omega$  and  $\mathcal{B} = |A|^{<\omega} $.
 \end{theorem}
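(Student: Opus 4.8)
The plan is to apply Theorem~\ref{t1}, so everything reduces to deciding when the coarse structure $\mathcal E_{\mathcal B}$ restricted to the bouquet has a countable base. The direction $(\Leftarrow)$ is the easy one: if $|A|=\omega$ and $\mathcal B=[A]^{<\omega}$, then each $\mathcal E_\alpha$ has a countable base $\{E_\alpha^n:n\in\omega\}$ (by Theorem~\ref{t1}), and since every bounded subset of $A$ is contained in some $\{0,1,\dots,m\}$, the entourages $E_m:=\{\,((x_\alpha),(y_\alpha))\in \bigvee_\alpha X_\alpha\times\bigvee_\alpha X_\alpha: \forall \alpha\le m\;\;(x_\alpha,y_\alpha)\in E_\alpha^m\,\}$, indexed by $m\in\omega$, form a cofinal family in $\mathcal E_{\mathcal B}|_{\bigvee_\alpha X_\alpha}$; hence the bouquet has a countable base and is metrizable.

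For the harder direction $(\Rightarrow)$ I would argue by contraposition: assume the bouquet is metrizable, hence $\mathcal E_{\mathcal B}|_{\bigvee_\alpha X_\alpha}$ has a countable base $\{E_k:k\in\omega\}$. First I would show $\mathcal B$ has a countable base. Each $E_k$ is contained in a basic entourage determined by some $B_k\in\mathcal B$ and a family $(E_\alpha^k)_{\alpha\in B_k}$; I claim $\{B_k:k\in\omega\}$ is cofinal in $\mathcal B$. Indeed, given any $B\in\mathcal B$, pick for each $\alpha\in B$ a point $x_\alpha\in X_\alpha\setminus\{e_\alpha\}$ (possible since $X_\alpha$ is unbounded, in particular not a singleton); the entourage requiring $(x_\alpha,y_\alpha)$ to be ``$e_\alpha$-close'' on $B$ is not literally one entourage of the bouquet, so instead I would use connectedness: for each $\alpha$ there is an entourage of $X_\alpha$ containing $(e_\alpha,x_\alpha)$, and boundedness of $B$ lets us assemble these into a single basic entourage $E$ of the $\mathcal B$-product with $x_\alpha\in E[e]$ for the bouquet-point obtained by moving only coordinate $\alpha$. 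Then $E\subseteq E_k$ for some $k$, and comparing balls at suitable points of $\bigvee_\alpha X_\alpha$ forces $B\subseteq B_k$ (if some $\alpha\in B\setminus B_k$, the ball $E_k[x_\alpha^{\ (\alpha)}]$ would fail to contain a point it must contain). So $\{B_k\}$ is a countable base for $\mathcal B$; this already gives $\cof(\mathcal B)\le\aleph_0$, hence $|A|=\cov(\mathcal B)\le\aleph_0$, and $\mathcal B$ has a countable base.

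Next I would rule out $\mathcal B\ne[A]^{<\omega}$. Suppose $\mathcal B$ contains an infinite set; since $\mathcal B$ now has a countable base and $|A|\le\omega$, we may assume $A=\omega$ and that $\mathcal B$ contains an infinite subset, hence (being a bornology closed under subsets) contains an infinite set which we may take to be $\omega$ itself after passing to it — more carefully, $\mathcal B$ restricted to that infinite set is a bornology containing all finite subsets and at least one infinite set, and I would show directly that the induced coarse structure on the corresponding sub-bouquet does not have a countable base. The mechanism: embed each $X_n$ as a subballean; if $\{E_k\}$ were a countable base, then for each $n$ the trace $\mathcal E_{\mathcal B}|_{X_n}$ equals $\mathcal E_n$, so $X_n$ contributes a genuine metric; now run a diagonal argument exactly as in the proof of Theorem~\ref{t4}. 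Specifically, pick points $x_n\in X_n$ ever-farther from $e_n$ (using unboundedness), so that the ``distance'' from $e$ to $x_n$ within the bouquet tends to infinity; since $\{0,\dots,n\}\in\mathcal B$ for all $n$, the point $(x_0,\dots,x_n,e_{n+1},\dots)$ lies in a bounded set, but the union over $n$ of these bounded sets is not bounded while being a countable union, forcing $\add(\mathcal B)=\aleph_0$; combined with a standard diagonal argument showing $\cof(\mathcal B)>\aleph_0$ (choosing the growth of $x_n$ to escape any prescribed countable family of entourages), we contradict the fact that $\mathcal B$ has a countable base. Thus $\mathcal B=[A]^{<\omega}$ and $|A|=\omega$, completing the proof.

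The main obstacle I anticipate is the bookkeeping in the contrapositive direction: converting a countable base of the bouquet's coarse structure into a countable base of the bornology $\mathcal B$, and in particular verifying the ``ball comparison'' step that pins $B\subseteq B_k$ — one must be careful that the balleans $X_\alpha$, being unbounded, supply enough non-distinguished points and that restricting a $\mathcal B$-product entourage to the bouquet genuinely forces a containment in $\mathcal B$ rather than merely in some larger family. Once this translation is in hand, the diagonal argument ruling out infinite members of $\mathcal B$ is a routine adaptation of the proof of Theorem~\ref{t4} via Theorem~\ref{t3}.
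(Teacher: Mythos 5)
Your overall strategy --- reduce everything to the existence of a countable base via Theorem~\ref{t1} --- is exactly what the paper intends (its entire justification of Theorems~\ref{t7} and~\ref{t8} is the one line ``Applying Theorem~\ref{t1}''), and both your $(\Leftarrow)$ direction and the first half of $(\Rightarrow)$ (extracting from a countable base $\{E_k\}$ the index sets $B_k\in\mathcal B$ and proving them cofinal in $\mathcal B$ by placing one non-distinguished point on each spine indexed by a given $B\in\mathcal B$) are sound. Two small slips there: the family $E_m$ is cofinal only after replacing each base $\{E^n_\alpha\}_n$ by an increasing one, and the identity $|A|=\cov(\mathcal B)$ is not available until you already know $\mathcal B=[A]^{<\omega}$, so the conclusion $|A|\le\omega$ has to come last, not in the middle.

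The genuine gap is in the step ruling out an infinite member $B_0\in\mathcal B$. First, $(x_0,\dots,x_n,e_{n+1},\dots)$ is a point of the $\mathcal B$-product, not of the bouquet, whose points have at most one non-distinguished coordinate; the bouquet is not a product of the $X_\alpha$, so the proof of Theorem~\ref{t4} does not transplant. Second, the proposed route ``via Theorem~\ref{t3}'' is a dead end in principle: Theorem~\ref{t3} yields a contradiction with \emph{normality}, but by Theorem~\ref{t9} every bornological bouquet of normal (in particular metrizable) balleans \emph{is} normal, so no contradiction of that kind can exist. Third, the set $\{x_n:n\in\omega\}$ consisting of one point per spine over $B_0$ is in fact \emph{bounded} in the bouquet whenever $B_0\in\mathcal B$ (it lies in $E[e]$ for the basic entourage over $B_0$ assembled by connectedness), so the ``countable union of bounded sets is unbounded'' claim fails. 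What does work is a direct diagonal argument against the countable base itself: enumerate distinct $\alpha_0,\alpha_1,\dots$ in $B_0$; for each $k$ note that $E_k[e]\cap X_{\alpha_k}$ is bounded in the unbounded ballean $X_{\alpha_k}$, pick $z_k\in X_{\alpha_k}\setminus E_k[e]$, and by connectedness choose $G_{\alpha_k}\in\mathcal E_{\alpha_k}$ with $z_k\in G_{\alpha_k}[e_{\alpha_k}]$; then the basic entourage over $B_0$ determined by $(G_\alpha)_{\alpha\in B_0}$ contains every pair $(e,z_k)$ and hence is contained in no $E_k$, contradicting metrizability. (Equivalently: an infinite $B_0\in\mathcal B$ forces $\cof(\mathcal B_{\bigvee_\alpha X_\alpha})>\aleph_0$.) Theorem~\ref{t3} and the additivity computation play no role.
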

 
\begin{theorem}\label{t8} 
Let $\mathcal{B}$  be a bornology on a set $A$  and let  $\{X_\alpha:\alpha\in A\}$ be a family of bounded pointed balleans, which are not singletons.
The $\mathcal{B}$-bouquet $\bigvee_{\alpha\in A}X_\alpha$ 
 is  metrizable if and only if the bornology $\mathcal{B}$ has a countable base.
\end{theorem}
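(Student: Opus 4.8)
The plan is to deduce the theorem from Theorem~\ref{t1}, so that it suffices to decide when the coarse structure of the bouquet $Y:=\bigvee_{\alpha\in A}X_\alpha$ has a countable base. The first step, and the only one using boundedness of the $X_\alpha$, is to record that a bounded ballean $X_\alpha$ has a largest entourage $X_\alpha\times X_\alpha$: writing $X_\alpha=E[x]$ with $E\in\mathcal E_\alpha$ and $x\in X_\alpha$ gives $X_\alpha\times X_\alpha\subseteq E^{-1}\circ E\in\mathcal E_\alpha$. Hence in the generating base of $\mathcal E_{\mathcal B}$ one may always take $E_\alpha=X_\alpha\times X_\alpha$, and so the entourages
$$\widehat B:=\big\{\big((x_\alpha)_{\alpha\in A},(y_\alpha)_{\alpha\in A}\big)\in X_{\mathcal B}\times X_{\mathcal B}:x_\alpha=y_\alpha\ \text{for all}\ \alpha\in A\setminus B\big\},\qquad B\in\mathcal B,$$
already form a base of $\mathcal E_{\mathcal B}$, and their traces $\{\widehat B\cap(Y\times Y):B\in\mathcal B\}$ form a base of $\mathcal E_{\mathcal B}|_Y$. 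I would also note the two elementary facts $B\subseteq B'\Rightarrow\widehat B\subseteq\widehat{B'}$ and, under the identifications $X_\alpha\subseteq Y$, $\widehat B[e]\cap Y=\{e\}\cup\bigcup_{\alpha\in B}(X_\alpha\setminus\{e_\alpha\})$.

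For the ``if'' part I would argue that if $\{B_n\}_{n\in\omega}$ is a base of $\mathcal B$, then any entourage of $\mathcal E_{\mathcal B}|_Y$ lies in some $\widehat B\cap(Y\times Y)$ with $B\in\mathcal B$, and choosing $n$ with $B\subseteq B_n$ gives containment in $\widehat{B_n}\cap(Y\times Y)$; thus $\{\widehat{B_n}\cap(Y\times Y)\}_{n\in\omega}$ is a countable base of $\mathcal E_{\mathcal B}|_Y$ and $Y$ is metrizable by Theorem~\ref{t1}.

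For the ``only if'' part I would start from a countable base of $\mathcal E_{\mathcal B}|_Y$ (available by Theorem~\ref{t1}), enlarge each of its members to an entourage of the form $\widehat{B_n}\cap(Y\times Y)$ with $B_n\in\mathcal B$ (possible by the first step), and then show that $\{B_n\}_{n\in\omega}$ is a base of $\mathcal B$. Given $B\in\mathcal B$, pick $n$ with $\widehat B\cap(Y\times Y)\subseteq\widehat{B_n}\cap(Y\times Y)$ and pass to the balls at $e$:
$$\{e\}\cup\bigcup_{\alpha\in B}\big(X_\alpha\setminus\{e_\alpha\}\big)=\widehat B[e]\cap Y\subseteq\widehat{B_n}[e]\cap Y=\{e\}\cup\bigcup_{\alpha\in B_n}\big(X_\alpha\setminus\{e_\alpha\}\big).$$
Since the $X_\alpha$ are not singletons, each $X_\alpha\setminus\{e_\alpha\}$ is nonempty, and since $X_\alpha\cap X_\beta=\{e\}$ for $\alpha\ne\beta$, these sets are pairwise disjoint; so the inclusion forces $B\subseteq B_n$, and $\{B_n\}_{n\in\omega}$ is a countable base of $\mathcal B$.

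The main obstacle will be the first step — verifying that boundedness of the $X_\alpha$ really collapses the fibrewise parameters $E_\alpha$, so that the base of $\mathcal E_{\mathcal B}$ becomes indexed by $\mathcal B$ alone and hence cofinally order-isomorphic to $\mathcal B$ via $B\mapsto\widehat B\cap(Y\times Y)$. This is exactly what breaks down for unbounded $X_\alpha$: there the diagonal argument behind Theorems~\ref{t5} and \ref{t7} forces the coarse structure to have uncountable cofinality even for $\mathcal B=[A]^{<\omega}$. The non-singleton hypothesis, in contrast, enters only at the very end, ensuring that each index $\alpha$ is ``seen'' by the balls $\widehat B[e]\cap Y$.
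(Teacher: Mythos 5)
Your proposal is correct and takes the route the paper intends: the paper's entire argument for Theorems~\ref{t7} and \ref{t8} is the one-line remark ``Applying Theorem 1, we get the following two theorems,'' and your write-up supplies exactly the missing details --- boundedness of the $X_\alpha$ collapses the generating entourages to the sets $\widehat B$, so that $B\mapsto \widehat B\cap(Y\times Y)$ is a cofinal monotone correspondence between $\mathcal B$ and a base of $\mathcal E_{\mathcal B}|_Y$, and the ball $\widehat B[e]\cap Y$ recovers $B$ because the spines are non-singletons. (You also correctly read the implicit requirement $x_\alpha=y_\alpha$ for $\alpha\in A\setminus B$ into the paper's definition of the base entourages of the $\mathcal B$-product, without which every such entourage would be all of $X_{\mathcal B}\times X_{\mathcal B}$.)
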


\begin{theorem}\label{t9} A bornological bouquet of any family  of pointed normal balleans is normal.
\end{theorem}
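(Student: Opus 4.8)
The plan is to use the characterization of normality from Theorem~\ref{t2}: given two disjoint, asymptotically disjoint subsets $Y,Z$ of the bouquet $\bigvee_{\alpha\in A}X_\alpha$, I must produce a slowly oscillating function $f\colon\bigvee_{\alpha\in A}X_\alpha\to[0,1]$ separating them. The key structural observation is that the bouquet is glued together along the single distinguished point $e$, and entourages of the bouquet, restricted to any wing $X_\alpha$, coincide (up to the behaviour near $e$) with entourages of $X_\alpha$ itself. So I would first reduce the problem to each wing separately and then check that the pieces assemble into a single well-defined, globally slowly oscillating function.

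First I would record the precise description of the coarse structure on $B:=\bigvee_{\alpha\in A}X_\alpha$: a basic entourage $E$ is obtained from a set $B_0\in\mathcal B$ and a choice $(E_\alpha)_{\alpha\in B_0}$ with $E_\alpha\in\mathcal E_\alpha$, and for a point $x\in X_\alpha\setminus\{e\}$ one has $E[x]=E_\alpha[x]$ if $\alpha\in B_0$ and $E[x]=\{x\}$ otherwise, while $E[e]=\bigcup_{\alpha\in B_0}E_\alpha[e_\alpha]$. In particular the restriction $\mathcal E|_{X_\alpha}$ is exactly $\mathcal E_\alpha$, so each wing $X_\alpha$ is a subballean asymorphic to the given normal ballean; and any bounded subset of $B$ meets only boundedly many wings and meets each in a bounded set. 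Next, given disjoint, asymptotically disjoint $Y,Z\subseteq B$, for each $\alpha$ put $Y_\alpha:=Y\cap X_\alpha$ and $Z_\alpha:=Z\cap X_\alpha$; these are disjoint subsets of $X_\alpha$, and since $E_\alpha[Y_\alpha]\cap E_\alpha[Z_\alpha]\subseteq E[Y]\cap E[Z]$ is bounded in $B$ (hence in $X_\alpha$), they are asymptotically disjoint in $X_\alpha$. By normality of $X_\alpha$ and Theorem~\ref{t2}, pick a slowly oscillating $f_\alpha\colon X_\alpha\to[0,1]$ with $f_\alpha(Y_\alpha)\subseteq\{0\}$, $f_\alpha(Z_\alpha)\subseteq\{1\}$; adjusting by an affine change (or using that we may WLOG assume $e\notin Y\cup Z$ after a harmless modification) I can arrange that all the $f_\alpha$ agree at the common point $e$, say $f_\alpha(e)=\tfrac12$ for every $\alpha$, which is legitimate since changing a slowly oscillating function on a bounded (here, one-point) set keeps it slowly oscillating. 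Then define $f\colon B\to[0,1]$ by $f|_{X_\alpha}=f_\alpha$; this is well defined because the wings overlap only in $e$.

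It remains to verify that this glued $f$ is slowly oscillating on $B$, and separates $Y$ from $Z$; the latter is immediate. For the former, fix a basic entourage $E$ coming from $B_0\in\mathcal B$ and $(E_\alpha)_{\alpha\in B_0}$, and fix $\varepsilon>0$. For each of the finitely-to-$\mathcal B$-many indices $\alpha\in B_0$, slow oscillation of $f_\alpha$ with respect to $E_\alpha$ gives a bounded $B^{(\alpha)}\subseteq X_\alpha$ outside of which balls have $f_\alpha$-diameter $<\varepsilon$. The candidate bounded set in $B$ is $\bigcup_{\alpha\in B_0}B^{(\alpha)}$ together with a bounded ball around $e$ absorbing $E[e]$; I need this union to be bounded in $B$, which holds precisely because $B_0\in\mathcal B$ and each $B^{(\alpha)}$ is bounded — this is the one place the bornological structure of the index set is used, and it works for \emph{any} bornology $\mathcal B$. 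For $x\in X_\alpha$ outside this set: if $\alpha\notin B_0$ then $E[x]=\{x\}$ has zero diameter; if $\alpha\in B_0$ then $E[x]\subseteq X_\alpha$ and $\diam f(E[x])=\diam f_\alpha(E_\alpha[x])<\varepsilon$ since $x\notin B^{(\alpha)}$. Hence $f$ is slowly oscillating, and Theorem~\ref{t2} yields normality of $B$.

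The main obstacle I anticipate is the bookkeeping at the distinguished point $e$: a ball $E[e]=\bigcup_{\alpha\in B_0}E_\alpha[e_\alpha]$ spreads across many wings simultaneously, so naive gluing could destroy slow oscillation there, and the values $f_\alpha(e)$ must be forced to coincide. The fix — absorbing $e$ and its $E$-ball into the bounded exceptional set, which is legitimate since $\{\alpha:e\ne e_\alpha\}=\emptyset\in\mathcal B$ makes $E[e]$ bounded in $B$, and normalizing $f_\alpha(e)=\tfrac12$ — sidesteps this, but one should double-check that these modifications do not spoil slow oscillation of the individual $f_\alpha$ (they don't, as they are changes on the bounded, indeed one-point, set $\{e_\alpha\}$). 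Everything else is routine unwinding of the definitions of the bouquet's coarse structure and of Theorem~\ref{t2}.
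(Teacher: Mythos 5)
Your proof follows essentially the same route as the paper's: restrict $Y$ and $Z$ to each wing $X_\alpha$, use normality of $X_\alpha$ together with Theorem~\ref{t2} to obtain slowly oscillating separating functions $f_\alpha$, normalize their common value at the basepoint $e$, and glue. The only difference is that you spell out the verification that the glued function is slowly oscillating (including the bookkeeping for balls that spill across wings near $e$), which the paper leaves implicit; the argument is correct.
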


\begin{proof} Let $\mathcal B$ be a bornology on a non-empty set $A$ and $X$ be the  $\mathcal{B}$-bouquet of pointed normal balleans  $X_{\alpha}$, $\alpha\in A$. Given two disjoint asymptotically disjoint sets $Y,Z\subset X$, we shall construct a slowly oscillating function $f:X\to[0,1]$ such that $f(Y)\subset\{0\}$ and $f(Z)\subset \{1\}$. %Observe that a function  $f: X \to [0,1]$  is  slowly oscillating if and only if the  restriction $f{\restriction}{X_\alpha}$  is slowly   oscillating  for each  $\alpha\in A$.
The definition of the coarse structure on the $\mathcal B$-bouquet ensures that for every $\alpha\in A$ the subsets  $Y\cap X_\alpha$ and $Z\cap X_\alpha$ are asymptotically disjoint in the coarse space $X_\alpha$, which is identified with the subspace $\{(x_\beta)\in X:\forall \beta\in A\setminus\{\alpha\}\;\;x_\beta=e_\beta\}$ of the $\mathcal B$-bouquet $X$. By the normality of $X_\alpha$, there exists a slowly oscillating function $f_\alpha:X_\alpha\to[0,1]$ such that $f_\alpha(Y\cap X_\alpha)\subset\{0\}$ and $f_\alpha(Z\cap X_\alpha)\subset [0,1]$. Changing the value of $f_\alpha$ in the distinguished point $e_\alpha$ of $X_\alpha$, we can assume that $f_\alpha(e_\alpha)=f_\beta(e_\beta)$ for any $\alpha,\beta\in A$. Then the function $f:X\to[0,1]$, defined by $f{\restriction}X_\alpha=f_\alpha$ for $\alpha\in A$ is slowly ascillating and has the desired property: $f(Y)\subset \{0\}$ and $f(Z)\subset\{1\}$. By  Theorem~\ref{t2}, the ballean $X$ is normal.
 \end{proof}

\section{Combs}

Let $(X, \mathcal{E})$  be a ballean and $A$  be a subset of $X$.
Let $\{(X_{\alpha}, \mathcal{E}_{\alpha}): \alpha\in A\}$  be a family of pointed balleans with the marked points  $e_{\alpha}\in X_{\alpha}$ for $\alpha\in A$.

The bornology $\mathcal B_X$ of the ballean $(X,\mathcal E)$ induces a bornology $\mathcal B:=\{B\in\mathcal B_X:B\subset A\}$ on the set $A$. Let $\bigvee_{\alpha\in A}X_\alpha$ be the $\mathcal B$-bouquet of the family of pointed balleans $\{(X_\alpha,\mathcal E_\alpha):\alpha\in A\}$, and let $e$ we denote the distinguished point of the bouquet $\bigvee_{\alpha\in A} X_\alpha$.

For for every $\alpha\in A$ we identify the ballean $X_\alpha$ with the subballean $\{(x_\beta)_{\beta\in A}\in\bigvee_{\alpha\in A}X_\alpha:\forall \beta\in A\setminus\{\alpha\}\;\;x_\beta=e_\beta\}$ of $\bigvee_{\alpha\in A}X_\alpha$. Then $\bigvee_{\alpha\in A}X_\alpha=\bigcup_{\alpha\in A}X_\alpha$ and $X_\alpha\cap X_\beta=\{e\}=\{e_\alpha\}=\{e_\beta\}$ for any distinct indices $\alpha,\beta\in A$.

The suballean
$$X\comb\limits_{\alpha\in A}X_\alpha:=(X\times\{e\})\cup\bigcup_{\alpha\in A}(\{\alpha\}\times X_\alpha)$$ of the ballean $X\times \textstyle{\bigvee\limits_{\alpha\in A}}X_\alpha$  is called the {\em comb} with handle $X$ and spines $X_\alpha$, $\alpha\in A\subset X$. We shall identify the handle $X$ and the spines $X_\alpha$ with the subsets $X\times\{e\}$ and $\{\alpha\}\times X_\alpha$ in the comb $X\comb\limits_{\alpha\in A}X_\alpha$.

%We say that a set $Z$  endowed with the coarse structure $\mathcal{C} $ , generated by $\mathcal{E}\cup \cup_{n\in A}\mathcal{E}_{a}$, is an {\it attachment}      of  $\{(X_{a}, \mathcal{E}_{a}): a\in A\}$  to $(X, \mathcal{E})$. We note that $\mathcal{C}|_{X} = \mathcal{E}$ and $\mathcal{C}|_{X_{a}} = \mathcal{E}_{a}$  for each  $a\in A$,  and   $\mathcal{C}$    is the smallest coarse structure with this property.

\begin{theorem}\label{t10} The comb $X\comb\limits_{\alpha\in A}X_\alpha$ is metrizable if the balleans $X$ and $X_\alpha$, $\alpha\in A$, are metrziable, and for each bounded set $B\subset X$ the intersection $A\cap B$ is finite.
\end{theorem}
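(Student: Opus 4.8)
The plan is to invoke Theorem~\ref{t1}: it suffices to exhibit a countable base for the coarse structure of the comb $C:=X\comb\limits_{\alpha\in A}X_\alpha$. Since $C$ is by definition a subballean of $X\times\bigvee_{\alpha\in A}X_\alpha$, where $\bigvee_{\alpha\in A}X_\alpha$ is the $\mathcal B$-bouquet for the bornology $\mathcal B=\{B\in\mathcal B_X:B\subseteq A\}$, and since restricting a base of a coarse structure to a subballean yields a base of the induced structure, it is enough to find a countable base for $X\times\bigvee_{\alpha\in A}X_\alpha$. Being a Cartesian product of two balleans, this space has a countable base as soon as each factor does; the factor $X$ has one by hypothesis and Theorem~\ref{t1}, so the whole proof reduces to producing a countable base for the coarse structure of the bouquet $\bigvee_{\alpha\in A}X_\alpha$.

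The one place where the hypothesis on $A$ enters is the observation that $A$ must be countable. Indeed, metrizability of $X$ gives, via Theorem~\ref{t1}, a countable base of $\mathcal E_X$; as $X$ is connected, the balls of these base entourages at a fixed point form a countable family of bounded sets covering $X$, and each of them meets $A$ in a finite set by hypothesis, so $A$ is a countable union of finite sets. The same hypothesis identifies the induced bornology as $\mathcal B=[A]^{<\omega}$: a bounded subset of $X$ contained in $A$ is finite, while conversely every finite subset of $A$ lies in $\mathcal B_X$. Thus $\mathcal B$ is a countable bornology and serves as its own countable base.

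It then remains to verify the general fact that a $\mathcal B$-bouquet of balleans $\{X_\alpha:\alpha\in A\}$ whose coarse structures each have a countable base again has a countable base, provided $\mathcal B$ has a countable base consisting of finite sets (here $\mathcal B=[A]^{<\omega}$). For this I would fix, for each $\alpha\in A$, a countable base $\mathcal D_\alpha$ of $\mathcal E_\alpha$ and consider the entourages of the bouquet displayed in the definition of the $\mathcal B$-product, restricted to those with $B\in[A]^{<\omega}$ and $(E_\alpha)_{\alpha\in B}\in\prod_{\alpha\in B}\mathcal D_\alpha$. For a fixed finite $B$ there are only countably many such tuples, so this is a countable family of entourages; a routine comparison of the defining conditions of such an entourage for $B\subseteq B'$ — using that all entourages contain the diagonal and that coordinates outside $B$ are required to coincide — shows the family is cofinal among the basic entourages of $\bigvee_{\alpha\in A}X_\alpha$, hence is a base of its coarse structure.

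The only genuinely delicate step is the countability of $A$: recognizing that metrizability of the handle together with the ``locally finite'' condition on $A\subset X$ forces $|A|\le\omega$. Everything downstream is then routine manipulation of bases that is uniform in the number of factors and requires no explicit description of the balls of the comb.
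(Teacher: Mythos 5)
Your proof is correct and follows essentially the same route as the paper: the paper's one-line proof cites Theorem~\ref{t7} to conclude that the bouquet $\bigvee_{\alpha\in A}X_\alpha$ is metrizable (the hypothesis on $A$ forcing $|A|\le\omega$ and $\mathcal B=[A]^{<\omega}$) and then notes that the comb is a subballean of the metrizable product $X\times\bigvee_{\alpha\in A}X_\alpha$. You merely unpack the application of Theorem~\ref{t7} into an explicit countable base for the bouquet and spell out the countability of $A$, which the paper leaves implicit.
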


\begin{proof} Applying Theorem~\ref{t7}, we conclude that the bouquet $\bigvee_{\alpha\in A}X_\alpha$ is metrizable. Then the comb $X\comb\limits_{\alpha\in A}X_\alpha$ is metrizable being a subspace of the metrizable ballean $X\times\bigvee_{\alpha\in A}X_\alpha$.
\end{proof}

By analogy with Theorem~\ref{t9} we can prove

\begin{theorem}\label{t11} The comb $X\comb\limits_{\alpha\in A}X_\alpha$ is normal if the balleans $X$ and $X_\alpha$, $\alpha\in A$, are normal.
\end{theorem}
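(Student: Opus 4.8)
The plan is to follow the scheme of the proof of Theorem~\ref{t9}. Fix two disjoint and asymptotically disjoint subsets $Y,Z$ of the comb $C:=X\comb\limits_{\alpha\in A}X_\alpha$; by Theorem~\ref{t2} it suffices to produce a slowly oscillating function $f:C\to[0,1]$ with $f(Y)\subset\{0\}$ and $f(Z)\subset\{1\}$. First I would record the basic geometry of $C$: under the identifications made above, the handle $X=X\times\{e\}$ and each spine $X_\alpha=\{\alpha\}\times X_\alpha$ carry their original coarse structures, distinct spines are disjoint, and each spine $X_\alpha$ meets the handle in the single point $(\alpha,e)=e_\alpha$. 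Moreover every entourage of $X$, resp. of $X_\alpha$, is the restriction to the handle, resp. to the spine $X_\alpha$, of an entourage of $C$ (restrict $E_X\times\bigtriangleup$, resp. $\bigtriangleup_X\times E$ with $E$ the basic bouquet entourage supported on $\{\alpha\}$, to $C$). Consequently $Y\cap X,Z\cap X$ are disjoint and asymptotically disjoint in $X$, and $Y\cap X_\alpha,Z\cap X_\alpha$ are disjoint and asymptotically disjoint in $X_\alpha$ for each $\alpha\in A$. By the normality of these balleans and Theorem~\ref{t2} there exist slowly oscillating functions $g:X\to[0,1]$ and $h_\alpha:X_\alpha\to[0,1]$, $\alpha\in A$, separating the corresponding pairs.

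Next I would glue these functions. Replace each $h_\alpha$ by the function $\tilde h_\alpha$ that agrees with $h_\alpha$ off the marked point $e_\alpha$ and has $\tilde h_\alpha(e_\alpha)=g(\alpha)$. Altering a function at a single point preserves slow oscillation (that point lies in $E_\alpha[x]$ only for $x$ in the bounded set $E_\alpha[e_\alpha]$) and preserves the separating property: if $(\alpha,e)\in Y$ then, being a handle point of $Y$, it has $g(\alpha)=0$, and, being $e_\alpha\in Y\cap X_\alpha$, it already has $h_\alpha(e_\alpha)=0$, so nothing changes there; symmetrically for $Z$; and if $(\alpha,e)\notin Y\cup Z$ the change is harmless. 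Since the spines are pairwise disjoint and meet the handle only at the points $(\alpha,e)$, where now $\tilde h_\alpha(e_\alpha)=g(\alpha)$, the assignments $f{\restriction}X:=g$ and $f{\restriction}X_\alpha:=\tilde h_\alpha$ glue to a well-defined $f:C\to[0,1]$ with $f(Y)\subset\{0\}$ and $f(Z)\subset\{1\}$.

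The substantive part is to check that $f$ is slowly oscillating, and here it suffices to consider a basic entourage of $C$, namely the restriction to $C$ of $E_X\times E$ where $E_X\in\mathcal E_X$ is symmetric, $B\in\mathcal B:=\{B\in\mathcal B_X:B\subset A\}$, and $E$ is the bouquet entourage determined by symmetric entourages $E_\gamma\in\mathcal E_\gamma$, $\gamma\in B$. A direct computation of balls gives, for a handle point $(x,e)$,
$$E[(x,e)]=\big(E_X[x]\times\{e\}\big)\cup\bigcup_{\gamma\in B\cap E_X[x]}\big(\{\gamma\}\times E_\gamma[e_\gamma]\big),$$
and, for a spine point $(\alpha,t)$ with $t\notin E_\alpha[e_\alpha]$, the ball $E[(\alpha,t)]$ equals $\{(\alpha,t)\}$ if $\alpha\notin B$ and $\{\alpha\}\times E_\alpha[t]$ if $\alpha\in B$. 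Thus, apart from the handle points lying in $E_X[B]$ and the spine points of $X_\alpha$ lying in $E_\alpha[e_\alpha]$ for $\alpha\in B$, the set $f(E[p])$ is either the $g$-image of a single $E_X$-ball or the $h_\alpha$-image of a single $E_\alpha$-ball. Given $\varepsilon>0$, slow oscillation of $g$ and of the $h_\alpha$ yields bounded sets $D_0\subset X$ and $D_\alpha\subset X_\alpha$ outside of which the respective diameters are less than $\varepsilon$, and I would take
$$D:=\big((D_0\cup E_X[B])\times\{e\}\big)\cup\bigcup_{\alpha\in B}\big(\{\alpha\}\times(D_\alpha\cup E_\alpha[e_\alpha])\big)$$
and verify that $\diam f(E[p])<\varepsilon$ for every $p\in C\setminus D$ (in particular, a handle point outside $E_X[B]$ cannot reach any spine, so its $E$-ball stays in the handle).

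The step I expect to be the main obstacle is proving that $D$ is bounded in $C$ — and this is exactly where the special structure of the comb is used ($A\subset X$ and the indexing bornology $\mathcal B=\{B\in\mathcal B_X:B\subset A\}$), just as the analogous point makes the bouquet construction work in Theorem~\ref{t9}. Since $B\in\mathcal B$, the set $B$ is bounded in $X$, so $E_X[B]$ is bounded; and, more importantly, the boundedness of $B$ guarantees that one entourage of $C$ centred at a single handle point can reach, simultaneously, the bounded pieces $D_\alpha\cup E_\alpha[e_\alpha]$ of \emph{all} spines $X_\alpha$ with $\alpha\in B$ — take its $X$-part large enough to contain $B$ in one ball and its $\gamma$-th bouquet-part large enough to contain $D_\gamma\cup E_\gamma[e_\gamma]$. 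Hence the ``diagonal'' union $\bigcup_{\alpha\in B}\{\alpha\}\times(D_\alpha\cup E_\alpha[e_\alpha])$ is bounded, so $D$ is bounded, $f$ is slowly oscillating, and Theorem~\ref{t2} yields the normality of $C$.
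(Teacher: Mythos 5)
Your proof is correct and follows exactly the route the paper intends: the paper gives no separate argument for Theorem~\ref{t11}, saying only that it is proved ``by analogy with Theorem~\ref{t9}'', and your construction --- restricting $Y,Z$ to the handle and to each spine, taking slowly oscillating separating functions there, adjusting them at the attachment points so that they agree with the handle function, and gluing --- is precisely that analogy. You in fact supply more detail than the paper gives even for Theorem~\ref{t9} itself (the explicit ball computations and the verification that the exceptional set $D$ is bounded, which correctly uses that $B\in\mathcal B_X$ so one entourage centred at a single handle point reaches all the relevant spine pieces at once).
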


%\section{ Attachments}

%Let $(X, \mathcal{E})$  be a balleans, $A$  be a subset of $X$.
%Let $\{(X_{a}, \mathcal{E}_{a}): a\in A\}$  be a family of balleans with the marked points  $\{e_{a}\in X_{a}: a\in A\}$. We identify each $e_{a}$  with $a$ and denote by  $Z$  the union  of $X$  and all $X_{a}$, $a\in A$.

%We say that a set $Z$  endowed with the coarse structure $\mathcal{C} $ , generated by $\mathcal{E}\cup \cup_{n\in A}\mathcal{E}_{a}$, is an {\it attachment}      of  $\{(X_{a}, \mathcal{E}_{a}): a\in A\}$  to $(X, \mathcal{E})$. We note that $\mathcal{C}|_{X} = \mathcal{E}$ and $\mathcal{C}|_{X_{a}} = \mathcal{E}_{a}$  for each  $a\in A$,  and   $\mathcal{C}$    is the smallest coarse structure with this property.

%\begin{theorem}\label{t10} Any attachment $(Z, C)$  of a countable family of metrizable balleans to a   metrizable ballean is   metrizable.
%\end{theorem}

%\begin{proof} By the description of $\mathcal{C}$,  under our assumptions, $\mathcal{C}$  has a countable base. Apply Theorem~\ref{t1}.
%\end{proof}

%By analogy with Theorem~\ref{t9} and with the same proof, we have

%\begin{theorem}\label{t11} A normal ballean attached  with any family of normal balleans is normal.
%\end{theorem}

\section{ Coarse structures, determined by bornologies}

Let $\mathcal{B}$  be a bornology on a set $X$.
We say that a coarse structure $\mathcal{E}$ on $X$ is {\it compatible} with $\mathcal{B}$ if  $\mathcal{B}$ coincides with the bornology $\mathcal B_X$ of all bounded subsets of $(X, \mathcal{E})$.

The family of all coarse structures, compatible with a given bornology $\mathcal B$ has the smallest and largest elements ${\Downarrow}\mathcal B$ and ${\Uparrow}\mathcal B$.

The smallest  coarse structure ${\Downarrow}\mathcal B$ is generated by the base consisting of the entourages $(B\times B)\cup\bigtriangleup_{X}$, where $B\in\mathcal{B}$.

The largest coarse structure ${\Uparrow}\mathcal{B}$
 consists of all entourages  $E\subseteq  X\times X$  such that  $E=E ^{-1}$ and
 $E[B]\in \mathcal{B}$ for each $B\in \mathcal{B}$.

An unbounded ballean $(X,\mathcal E)$ is called
\begin{itemize}
\item {\em discrete} if $\mathcal E={\Downarrow}\mathcal B_X$,
\item {\em antidiscrete} if $\mathcal E={\Uparrow}\mathcal B_X$,
\end{itemize}
where $\mathcal B_X$ is the bornology on $X$, generated by the coarse structure $\mathcal E$.

It can be shown that an unbounded ballean $(X,\mathcal E)$ is discrete if and only if  for every $E\in \mathcal{E}$ there exists a bounded set $B\subset X$ such that
  $E[x]= \{ x\} $  for each $x\in X \backslash B$. In \cite[Chapter 3]{b3} discrete balleans are called pseudodiscrete.
  
An unbounded ballean $(X,\mathcal E)$ is called
\begin{itemize}
\item {\em maximal} if its coarse structure is 
maximal by inclusion in the family of all unbounded coarse structures on $X$;
\item {\em ultradiscrete} if $X$ is discrete and its bornology $\mathcal B_X$ is 
maximal by inclusion in the family of all unbounded bornologies on $X$.
\end{itemize}

 It is clear that each maximal ballean is antidiscrete. 
For maximal balleans, see \cite[Chapter 10]{b3}. For any regular cardinal $\kappa$ the  ballean
  $(\kappa, {\Uparrow} [\kappa]^{<\kappa})$
      is  maximal.
    
It can be shown that each maximal ballean is antidscrete and each ultradiscrete ballean is both discrete and antidiscrete.

\begin{question} Is there a ballean which is discrete and antidiscrete but not ultradiscrete?
\end{question} 

A ballean $(X, \mathcal{E})$  is called {\it ultranormal} if $X$ contains no two unbounded asymptotically disjoint subsets. By  \cite[Theorem  10.2.1]{b3},  every unbounded subset of a  maximal  ballean is large, which implies that each maximal ballean is ultranormal.

\begin{example}\label{t12}For every infinite set $X$, there exists a bornology $\mathcal{B}$  on $X$  such that the  ballean  $(X, {\Uparrow} \mathcal{B})$ is normal but not ultranormal.
\end{example}

\begin{proof} We take two free ultrafilters  $p, q$  on $X$  such that for every mapping  $f: X \to X$,  there exists  $P\in p$  such that $f(P)\notin q$.
We put $\mathcal{B}= \{B\subseteq X: B\notin p, B\notin  q\} $ and note
that  $\mathcal{B}$  is a bornology.

To show that the ballean $(X,{\Uparrow}\mathcal B)$ is normal, it suffices to prove that any disjoint unbounded sets $Y,Z\subset X$ are asymptotically separated.
This will follow as soon as we show that $Y$ is an asymptotic neighborhood of $Y$ and $Z$ is an asymptotic neighborhood of $Z$ in the ballean $(X,{\Uparrow}\mathcal B)$. 

The definition of the bornology $\mathcal B\ni Y,Z$ ensures that one of the sets (say $Y$) belongs to the ultrafilter $p$ and the other (in this case $Z$) belongs to the ultrafilter $q$.

First we show that $Y$ is an asymptotic neighborhood of $Y$ in $(X,{\Uparrow}\mathcal B)$. Given any entourage $E \in {\Uparrow}  \mathcal{B}$, we should check that the set $E[Y]\setminus Y$ is bounded. Let $Y_0=\{y\in Y:E[y]\subset Y\}$ and $Y_1:=Y\setminus Y_0$. Since $Y=Y_0\cup Y_1\in p$, either $Y_0\in p$ or $Y_1\in p$.

If $Y_0\in p$, then $Y_{1}\in \mathcal{B} $  and  $E[Y]\setminus Y\subset E[Y_1]$ is bounded in $(X,{\Uparrow}\mathcal B)$.

If $Y_1\in p$, then for each $y\in Y_{1}$ we can choose an element
  $f(y)\in E[y]\setminus Y$. By the choice of the ultrafilters $p,q$, there exists a set $P\subseteq Y_{1}$, $P\in p$  such  that  $f(P)\notin q$.
Hence, $f(P)\in \mathcal{B}$ and the set $P\subset E^{-1} [f(P)]$ is bounded, which contradicts $P\in p$. This contradiction shows that the case $Y_1\in p$ is not possible and $E[Y]\setminus Y$ is bounded.

By analogy we can prove that $E[Z]\setminus Z$ is bounded. Therefore, the disjoint sets $Y,Z$ have disjoint asymptotically separated neighborhoods and the ballean $X$ is normal. 

Since $X$ can be written as the union $X=Y\cup Z$ of two disjoint (unbounded) sets $Y\in p$, $Z\in q$, the ballean $(X,{\Uparrow}\mathcal B)$ is not ultranormal.
\end{proof}

By a {\em bornological space} we understand a pair $(X,\mathcal B_X)$ consisting of a set $X$ and a bornology $\mathcal B_X$ on $X$. A bornological space $(X,\mathcal B_X)$ is {\em unbounded} if $X\notin\mathcal B_X$. For two bornological spaces $(X,\mathcal B_X)$ and $(Y,\mathcal B_Y)$ their product is the bornological space $(X\times Y,\mathcal B)$ endowed with the bornology $$\mathcal B_{X\times Y}=\{B\subset X\times Y:B\subset B_X\times B_Y\mbox{ for some } B_X\in\mathcal B_X,\;B_Y\in\mathcal B_Y\}.$$

The following theorem allows us to construct many examples of bornological spaces $(X,\mathcal B)$ for which the coarse space $(X,{\Uparrow}\mathcal B)$ is not normal.

\begin{theorem} Let $(X\times Y,\mathcal B)$ be the product of two unbounded bornological spaces $(X,\mathcal B_X)$ and $(Y,\mathcal B_Y)$.
If $\cov(\mathcal B_Y)<\add(\mathcal B_X)$, then the coarse space $(X\times Y,{\Uparrow}\mathcal B)$ is not normal.
\end{theorem}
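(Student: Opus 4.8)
The plan is to exhibit two disjoint subsets $P,Q\subseteq X\times Y$ that are asymptotically disjoint in $(X\times Y,{\Uparrow}\mathcal B)$ but have no disjoint asymptotic neighbourhoods, which by definition shows that $(X\times Y,{\Uparrow}\mathcal B)$ is not normal. Fix a point $x_*\in X$, a point $y_0\in Y$, and a cover $\{C_i:i<\lambda\}$ of $Y$ by sets $C_i\in\mathcal B_Y$, where $\lambda:=\cov(\mathcal B_Y)$; by hypothesis $\lambda<\kappa:=\add(\mathcal B_X)$. Writing $\mathrm{pr}_X,\mathrm{pr}_Y$ for the coordinate projections, I would take
$$P:=\{x_*\}\times Y,\qquad Q:=(X\setminus\{x_*\})\times\{y_0\}.$$
Throughout I would use two facts: (a) $E[B]\in\mathcal B$ for every $E\in{\Uparrow}\mathcal B$ and $B\in\mathcal B$ (this is the definition of ${\Uparrow}\mathcal B$); and (b) a union of at most $\lambda$ members of $\mathcal B_X$ again lies in $\mathcal B_X$, since $\lambda<\add(\mathcal B_X)$. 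The key structural point is that $P=\bigcup_{i<\lambda}(\{x_*\}\times C_i)$ is a union of $\lambda$ sets from $\mathcal B$.

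First I would check that $P$ and $Q$ are asymptotically disjoint. Given $E\in{\Uparrow}\mathcal B$, put $E^2:=E\circ E\in{\Uparrow}\mathcal B$ (it is symmetric, as $(E\circ E)^{-1}=E^{-1}\circ E^{-1}=E\circ E$) and $W:=E^2[P]=\bigcup_{i<\lambda}E^2[\{x_*\}\times C_i]$; each summand is in $\mathcal B$ by (a), so $\mathrm{pr}_X(W)\in\mathcal B_X$ by (b), and hence $B':=\mathrm{pr}_X(W)\times\{y_0\}\in\mathcal B$. Now if $z\in E[P]\cap E[Q]$, choose $p\in P\cap E[z]$ and $q\in Q$ with $z\in E[q]$; then $p\in E[z]\subseteq E[E[q]]=E^2[q]$, so by symmetry $q\in E^2[p]\subseteq E^2[P]=W$, and since the second coordinate of $q$ equals $y_0$ we get $q\in B'$ and $z\in E[q]\subseteq E[B']$. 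Thus $E[P]\cap E[Q]\subseteq E[B']\in\mathcal B$, as required.

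The substantial step is to show that $P$ and $Q$ admit no disjoint asymptotic neighbourhoods. Assume $U,V$ are disjoint asymptotic neighbourhoods of $P$ and $Q$. For $D\in\mathcal B_X$ consider the entourage
$$E_D:=\bigtriangleup_{X\times Y}\cup\{((x_*,y),(x,y)),\,((x,y),(x_*,y)):x\in D,\ y\in Y\};$$
one checks $E_D\in{\Uparrow}\mathcal B$ (over a bounded rectangle $B_X\times B_Y$ its balls stay inside $(B_X\cup D\cup\{x_*\})\times B_Y\in\mathcal B$) and $E_D[P]=(D\cup\{x_*\})\times Y$, so $(D\times Y)\setminus U$ is bounded and therefore $U\supseteq D\times(Y\setminus F_D)$ for the bounded set $F_D:=\mathrm{pr}_Y((D\times Y)\setminus U)\in\mathcal B_Y$. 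Symmetrically, for $D'\in\mathcal B_Y$ with $y_0\in D'$ the entourage swapping $(x,y_0)$ with $(x,y)$ for $x\ne x_*$, $y\in D'$ lies in ${\Uparrow}\mathcal B$, has image $(X\setminus\{x_*\})\times D'$ on $Q$, and gives $V\supseteq((X\setminus\{x_*\})\setminus G_{D'})\times D'$ for some $G_{D'}\in\mathcal B_X$. Applying this with $D'_i:=C_i\cup\{y_0\}$ for $i<\lambda$, the set $\{x_*\}\cup\bigcup_{i<\lambda}G_{D'_i}$ is bounded by (b), so I may pick $x_{**}\in X$ outside it; since $F_{\{x_{**}\}}\in\mathcal B_Y$ cannot contain $Y=\bigcup_i C_i$, I may pick $i_0<\lambda$ and $y^*\in C_{i_0}\setminus F_{\{x_{**}\}}$. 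Then $(x_{**},y^*)\in\{x_{**}\}\times(Y\setminus F_{\{x_{**}\}})\subseteq U$ and also $(x_{**},y^*)\in((X\setminus\{x_*\})\setminus G_{D'_{i_0}})\times D'_{i_0}\subseteq V$, contradicting $U\cap V=\emptyset$.

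The point that requires care, and that forces these particular choices, is that members of $\mathcal B_Y$ may have arbitrarily large cardinality, so one cannot pigeonhole over the points of a bounded subset of $Y$. This is why $P$ is taken to be the whole column $\{x_*\}\times Y$ and is decomposed through the $\lambda$-element cover of $Y$ (so that images of $P$ have $\mathcal B_X$-small $X$-projection), and why in the non-separation step one swaps whole bounded slabs $D\times Y$ rather than single columns: the slab entourages $E_D$ produce the uniform inclusion $U\supseteq D\times(Y\setminus F_D)$ with a single $F_D\in\mathcal B_Y$, after which the inequality $\cov(\mathcal B_Y)<\add(\mathcal B_X)$ closes the argument.
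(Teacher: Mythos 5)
Your proof is correct, and it attacks the problem with the same pair of test sets as the paper: a horizontal fibre $X\times\{y_0\}$ and a vertical fibre $\{x_0\}\times Y$ (which you perturb slightly so that they are literally disjoint). The asymptotic--disjointness step is also the same mechanism as in the paper: decompose the vertical fibre along a cover of $Y$ by $\cov(\mathcal B_Y)$ many bounded sets and absorb the resulting $X$-projections using $\cov(\mathcal B_Y)<\add(\mathcal B_X)$. Where you genuinely diverge is the endgame. The paper shows that the sets $D_\alpha$ witnessing the thinness of $(X\times Y_\alpha)\setminus U$ form a cofinal family in $\mathcal B_X$ of size at most $\cov(\mathcal B_Y)$, and derives the cardinal contradiction $\cof(\mathcal B_X)\le\cov(\mathcal B_Y)<\add(\mathcal B_X)\le\cof(\mathcal B_X)$. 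You instead invoke $\add(\mathcal B_X)$ a second time to unite the $\lambda$ exceptional sets $G_{D'_i}$ on the $X$-side, step outside that union, and exhibit an explicit point of $U\cap V$. Your finish is more elementary in that it never appeals to the cofinality invariant, but it is also less general: the paper deliberately isolates three abstract properties of a coarse structure (compatibility with $\mathcal B$ together with two absorption conditions) under which the whole argument runs, so its proof applies to any coarse structure on $X\times Y$ with those properties, whereas your explicit slab entourages $E_D$ and the column-swapping entourages are tailored specifically to ${\Uparrow}\mathcal B$. Both arguments establish the stated theorem.
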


\begin{proof} Fix any point $(x_0,y_0)\in X\times Y$. Assuming that $\cov(\mathcal B_Y)<\add(\mathcal B_X)$, we shall prove that for a coarse structure $\mathcal E$ on $X\times Y$ is not normal if $\mathcal E$ has the following three properties:
\begin{enumerate}
\item $\mathcal E$ is compatible with the bornology $\mathcal B$;
\item for any $B_Y\in\mathcal B_Y$ there exists $E\in\mathcal E$ such that $X\times B_Y\subset E[X\times\{y_0\}]$;
\item for any $B_X\in\mathcal B$ there exists $E\in\mathcal E$ such that $B_X\times Y\subset E[\{x_0\}\times Y]$.
\end{enumerate}
It is easy to see that the coarse structure ${\Uparrow}\mathcal B$ has these three properties.

By the definition of the cardinal $\kappa=\cov(\mathcal B_Y)$, there there is a family $\{Y_\alpha\}_{\alpha\in\kappa}\subset\mathcal B_Y$ such that $\bigcup_{\alpha\in\kappa}Y_\alpha=Y$.

Assume that $\mathcal E$ is a coarse structure on $X\times Y$ satisfying the conditions (1)--(3). First we check that the sets $X\times\{y_0\}$ and $\{x_0\}\times Y$ are asymptotically disjoint in $(X\times Y,\mathcal E)$. Given any entourage $E\in\mathcal E$, we should prove that the intersection $E[X\times\{y_0\}]\cap E[\{x_0\}\times Y]$ is bounded.  By the condition (1), for every $\alpha\in\kappa$ the bounded set $E^{-1}[E[\{x_0\}\times Y_\alpha]]$ is contained in the product $B_\alpha\times Y$ for some bounded set $B_\alpha\in\mathcal B_X$.  Since $\kappa<\add(\mathcal B_X)$, the union $B_{<\kappa}:=\bigcup_{\alpha\in\kappa}B_\alpha$ belongs  to the bornology $\mathcal B_X$. Given any point $(u,v)\in E[X\times\{y_0\}]\cap E[\{x_0\}\times Y]$, find $x\in X$ and $y\in Y$ such that $(u,v)\in E[(x,y_0)]\cap E[(x_0,y)]$. Since $Y=\bigcup_{\alpha\in\kappa}Y_\alpha$, there exists $\alpha\in\kappa$ such that $y\in Y_\alpha$. Then $(x,y_0)\in E^{-1}[E[(x_0,y)]]\subset E^{-1}[E[\{x_0\}\times Y_\alpha]]\subset B_\alpha\times Y\subset B_{<\kappa}\times Y$ and hence
$(u,v)\in E[(x,y_0)]\subset E[B_{<\kappa}\times\{y_0\}]$, which implies that the intersection $$E[X\times\{y_0\}]\cap E[\{x_0\}\times Y]\subset E[B_{<\kappa}\times \{y_0\}]$$is bounded in $(X\times Y,\mathcal E)$.

Assuming that the coarse space $(X\times Y,\mathcal E)$ is normal, we can find disjoint asymptotical neighborhoods $U$ and $V$ of the asymptotically disjoint sets $X\times\{y_0\}$ and $Y\times\{x_0\}$. By the condition (2), for every $\alpha\in\kappa$ there exists an entourage $E_\alpha\in\mathcal E$ such that $X\times Y_\alpha\subset E_\alpha[X\times\{y_0\}]$. Since $U$ is an asymptotic neighborhood of the set $X\times\{y_0\}$ in $(X\times Y,\mathcal E)$, the set $(X\times Y_\alpha)\setminus U\subset E_\alpha[X\times\{y_0\}]\setminus U$ is bounded in $(X\times Y,\mathcal E)$. Now the condition (1) implies that $(X\times Y_\alpha)\setminus U\subset D_\alpha\times Y$ for some bounded set $D_\alpha\in\mathcal B_X$.

 We claim that the family $\{D_\alpha\}_{\alpha\in\kappa}$ is cofinal in $\mathcal B_X$. Indeed, given any bounded set $D\in\mathcal B_X$, use the condition (3) and find a entourage $E\in\mathcal E$ such that $D\times Y\subset E[\{x_0\}\times Y]$. Since $V$ is an asymptotic neighborhood of the set $\{x_0\}\times Y$, the set $E[\{x_0\}\times Y]\setminus V$ is bounded in $(X\times Y,\mathcal E)$ and the condition (1) ensures that it has bounded projection onto $Y$.
Since $Y\notin\mathcal B_Y$, we can find a point $y\in Y$ such that $X\times\{y\}$ is disjont with $E[\{x_0\}\times Y]\setminus V$. Find $\alpha\in\kappa$ with $y\in Y_\alpha$.  Then $(X\times \{y\})\cap E[\{x_0\}\times Y]\subset V$ and hence
$$D\times \{y\}\subset (X\times y\})\cap E[\{x_0\}\times Y]\subset (X\times Y_\alpha)\cap V\subset (X\times Y_\alpha)\setminus U\subset D_\alpha\times Y,$$ which yields the desired inclusion $D\subset D_\alpha$. Therefore,
$$\cof(\mathcal B_X)\le|\{D_\alpha\}_{\alpha\in \kappa}|\le\kappa=\cov(\mathcal B_Y)<\add(\mathcal B_X),$$
which contradicts the known inequality $\add(\mathcal B_X)\le\cof(\mathcal B_X)$.
\end{proof}

\end{document}